\pdfoutput=1
\RequirePackage{ifpdf}
\ifpdf 
\documentclass[pdftex]{sigma}
\else
\documentclass{sigma}
\fi

\usepackage{enumerate}

\numberwithin{equation}{section}

\newtheorem{Theorem}{Theorem}[section]
\newtheorem{Corollary}[Theorem]{Corollary}
\newtheorem{Question}[Theorem]{Question}
\newtheorem{Lemma}[Theorem]{Lemma}
\newtheorem{Proposition}[Theorem]{Proposition}
\newtheorem{Claim}[Theorem]{Claim}
 { \theoremstyle{definition}
\newtheorem{Definition}[Theorem]{Definition}

\newtheorem{Example}[Theorem]{Example}
\newtheorem{Notation}[Theorem]{Notation}
\newtheorem{Remark}[Theorem]{Remark} }

\newcommand{\cat}[1]{\textsc{#1}}

\begin{document}

\newcommand{\arXivNumber}{2008.10072}

\renewcommand{\PaperNumber}{030}

\FirstPageHeading

\ShortArticleName{Prescribed Riemannian Symmetries}

\ArticleName{Prescribed Riemannian Symmetries}

\Author{Alexandru CHIRVASITU}

\AuthorNameForHeading{A.~Chirvasitu}

\Address{Department of Mathematics, University at Buffalo, Buffalo, NY 14260-2900, USA}
\Email{\href{mailto:achirvas@buffalo.edu}{achirvas@buffalo.edu}}

\ArticleDates{Received September 27, 2020, in final form March 10, 2021; Published online March 25, 2021}

\Abstract{Given a smooth free action of a compact connected Lie group $G$ on a smooth compact manifold $M$, we show that the space of $G$-invariant Riemannian metrics on $M$ whose automorphism group is precisely $G$ is open dense in the space of all $G$-invariant metrics, provided the dimension of $M$ is ``sufficiently large'' compared to that of $G$. As~a~con\-se\-qu\-ence, it follows that every compact connected Lie group can be realized as the automorphism group of some compact connected Riemannian manifold; this recovers prior work by Bedford--Dadok and Saerens--Zame under less stringent dimension conditions. Along the way we also show, under less restrictive conditions on both dimensions and actions, that the space of~$G$-invariant metrics whose automorphism groups preserve the $G$-orbits is dense $G_{\delta}$ in the space of all $G$-invariant metrics.}

\Keywords{compact Lie group; Riemannian manifold; isometry group; isometric action; principal action; principal orbit; scalar curvature; Ricci curvature}

\Classification{53B20; 58D17; 58D19; 57S15}

\section{Introduction}

The present paper fits into the general theme of realizing a predetermined group as the symmetry group of a structure (combinatorial, topological, geometric, etc.) of given type. Variations on~the theme abound in the literature, mostly (but by no means exclusively) in the context of {\it finite} groups. To list a few instances:

\begin{enumerate}[(1)]\itemsep=0pt
\item Every finite group is the automorphism group of
 \begin{itemize}\itemsep=0pt
 \item a finite graph~\cite{frcht},
 \item even better, a finite 3-regular graph~\cite[Theorems 2.4 and 4.1]{frcht-3},
 \item more generally, a finite graph with given connectivity or chromatic number, regular of given degree, and a number of other such constraints~\cite[Theorem~1.2]{sabid},
 \item some convex polytope, with the group acting either purely combinatorially~\cite[Theorems 1 and 2]{Schulte:2015eu} or isometrically~\cite[Theorem~1.1]{Doignon:2018ch}.
 \end{itemize}
\item More generally, {\it arbitrary} (possibly infinite) groups are graph isomorphism groups~\cite[Theo\-rem]{sabid-inf}.
\item Switching to the more topologically-flavored setup that informs this paper,
 \begin{itemize}\itemsep=0pt
 \item {\it Polish} (i.e.,~separable completely metrizable) topological groups can be realized as isometry groups of separable complete metric spaces (\cite{gk} and~\cite[Section~3]{mell}),
 \item in the same spirit, compact metrizable groups are isometry groups of compact metric spaces~\cite[Theorem~1.2]{mell},
 \item the same goes for {\it locally} compact groups and spaces~\cite[Theorem~2.1]{ml},
 \item Lie groups are isometry groups of manifolds (equipped with metrics compatible with the manifold structure)~\cite[Corollary~1.2 and discussion following Proposition~1.4]{niem}.
 \end{itemize}
\end{enumerate}
This paper was originally motivated by the following problem, posed as open in~\cite[Section~4]{mell} (and appearing also as~\cite[Question~Q4]{niem}):

\begin{Question}\label{qu:mot}
 Is it the case that every compact Lie group is the isometry group of some compact Riemannian manifold?
\end{Question}

The answer turns out to be affirmative, as confirmed in both~\cite[Theo\-rem~3]{bd} and~\cite[Theo\-rem~1]{sz} (I am grateful to one of the referees for bringing this work to my attention; I was not aware of it while writing an early draft).

With this question as the initial motivating driver, one can then pose the natural follow-up problem (to be made precise) of ``how large'' the set of desirable Riemannian metrics is, given a~manifold equipped with an action by a compact Lie group. The results below all revolve around the general principle that given an (always smooth, for us) action of $G$ on $M$, ``most'' $G$-invariant Riemannian metrics on $M$ are maximally rigid. In the case $G=\{1\}$ this same generic rigidity principle informs~\cite[Corollary~8.2 and Proposition~8.3]{eb}, which we paraphrase slightly as

\begin{Theorem}
 Let $M$ be a compact smooth manifold. The set of Riemannian metrics on $M$ with trivial isometry group is open in the space of all Riemannian metrics, and it is dense if all connected components of $M$ have dimension $\ge 2$.
\end{Theorem}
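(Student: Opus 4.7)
The plan is to treat openness and density separately: openness from compactness properties of isometry groups, and density from Ebin's slice theorem combined with an explicit perturbation argument.

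\textbf{Openness.} I would argue by contradiction. Suppose $g$ has trivial isometry group but there exist $g_n \to g$ in $C^\infty$ with $\phi_n \in \mathrm{Iso}(M,g_n) \setminus \{\mathrm{id}\}$. Uniform $C^\infty$-regularity of isometries between $C^\infty$-close metrics (a version of the Myers--Steenrod theorem), combined with Arzelà--Ascoli, yields a subsequence along which $\phi_n \to \phi$ smoothly, with $\phi \in \mathrm{Iso}(M,g) = \{\mathrm{id}\}$. Thus $\phi_n \to \mathrm{id}$ smoothly, so for large $n$ the $\phi_n$ lie in the identity component of $\mathrm{Iso}(M,g_n)$. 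If that identity component is $\{\mathrm{id}\}$ for infinitely many $n$ then $\phi_n = \mathrm{id}$ for those $n$, contradicting the choice of $\phi_n$; otherwise $\phi_n = \exp_{g_n}(X_n)$ for a nonzero $g_n$-Killing field $X_n$ of small norm, and rescaling the $X_n$ to unit $g_n$-norm and passing to a $C^\infty$ subsequential limit (the Killing equation gives uniform regularity) produces a nonzero $g$-Killing field, again a contradiction.

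\textbf{Density.} Set $I_0 := \mathrm{Iso}(M,g)$, a compact Lie group. Ebin's slice theorem provides an $I_0$-invariant slice $S \ni g$ for the $\mathrm{Diff}(M)$-action on the space of metrics, such that every nearby metric is $\mathrm{Diff}(M)$-equivalent to a unique element of $S$, and the isometry group of any $h \in S$ embeds as a closed subgroup of $I_0$. Density of trivial-isometry metrics near $g$ therefore reduces to density, within $S$, of metrics with trivial $I_0$-stabilizer. For any $\phi \in I_0 \setminus \{\mathrm{id}\}$, pick $p \in M$ with $\phi(p) \neq p$: compactly supported symmetric 2-tensor deformations of $g$ localized near $p$ (and projected into the slice) furnish plenty of directions not preserved by $\phi^\ast$ whenever $\dim M \ge 2$, showing that $S^\phi := \{h \in S : \phi \cdot h = h\}$ has dense complement in $S$. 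A standard stratification argument for compact Lie group actions then yields a dense set of metrics in $S$ with trivial $I_0$-stabilizer.

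The main obstacle is the density portion: one must set up Ebin's slice theorem, then organize perturbations across all non-trivial $\phi \in I_0$ simultaneously via a Baire-type argument over orbit strata. The dimension hypothesis is genuinely used --- in dimension one the only connected compact example is $S^1$, every Riemannian metric on which is homogeneous with isometry group containing a circle, so the density statement fails there.
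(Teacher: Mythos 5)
Your openness argument has a genuine gap at the step ``$\phi_n\to\mathrm{id}$ smoothly, so for large $n$ the $\phi_n$ lie in the identity component of $\mathrm{Iso}(M,g_n)$.'' For a \emph{fixed} compact Lie group the identity component is open, so elements close to the identity lie in it; but here the group varies with $n$, and you have no uniform (in $n$) separation between $\mathrm{Iso}(M,g_n)^0$ and the other components. The scenario you must rule out is exactly a sequence $g_n\to g$ whose isometry groups are nontrivial \emph{finite} groups whose nontrivial elements converge to the identity; your dichotomy simply assumes this away, which begs the question, since ruling it out is the substance of openness. The paper's source gets this from the semicontinuity theorem \cite[Theorem~8.1]{eb} (via Ebin's slice theorem): for $g'$ near $g$, $\mathrm{Iso}(M,g')$ is conjugate in the diffeomorphism group to a subgroup of $\mathrm{Iso}(M,g)$, so openness is immediate. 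If you want to keep your compactness argument, it can be repaired, but only by adding real input: applying Arzel\`a--Ascoli to \emph{arbitrary} elements of $\mathrm{Iso}(M,g_n)$ shows the maximal displacement over the whole group tends to $0$ (an element with displacement bounded below would limit to a nontrivial isometry of $g$), hence all $\mathrm{Iso}(M,g_n)$-orbits are small, and Newman's theorem (no nontrivial compact group acts effectively on a manifold with all orbits of diameter below a fixed threshold) then forbids $\mathrm{Iso}(M,g_n)\neq\{\mathrm{id}\}$. Some such ingredient is indispensable and is absent as written. (Also, the positive-dimensional case does not need $\phi_n=\exp_{g_n}(X_n)$: a nontrivial identity component directly supplies a nonzero Killing field to normalize and pass to the limit.)

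On density, your route differs from the proof the paper sketches (Ebin's): there one perturbs the metric so that a curvature invariant --- the largest eigenvalue of the Ricci operator, or in the paper's equivariant variant the scalar curvature --- attains its maximum at a \emph{unique} point, which every isometry must then fix; repeating the perturbation on small spheres around that point produces $\dim M+1$ fixed points in general position, and Myers--Steenrod forces triviality. Dimension $\ge 2$ enters precisely because in dimension $1$ there is no curvature to perturb. Your slice-based outline leaves its two decisive steps unproved: (i) that for every nontrivial $\phi\in I_0$ there are arbitrarily small deformations \emph{within the slice} not fixed by $\phi$ --- the parenthetical ``projected into the slice'' conceals exactly what fails on $\mathbb{S}^1$, where every localized bump is absorbed by the diffeomorphism action and the surviving slice direction (total length) is $I_0$-invariant, so you must identify the mechanism (curvature) that survives the projection when $\dim M\ge 2$; and (ii) the passage from ``each fixed set $S^\phi$ has dense complement'' (there are uncountably many $\phi$) to a single metric with trivial $I_0$-stabilizer --- the invoked ``standard stratification argument'' in the infinite-dimensional slice requires countability of the occurring isotropy types and closedness of the corresponding strata, none of which you establish, and which you yourself flag as the main unresolved obstacle. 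So as it stands both halves contain genuine gaps rather than complete arguments.
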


The version of this result proven below (Theorems~\ref{th:equirig} and~\ref{th:precaut}), involving a $G$-action, reads as follows (with $\mathcal{M}_G(M)$ denoting the space of $G$-invariant metrics on $M$, equipped with its $C^{\infty}$ topology):

\begin{Theorem}\label{th:summ}
 Let $G$ be a compact Lie group acting smoothly on the compact manifold $M$.
 \begin{enumerate}[$(a)$]\itemsep=0pt
 \item\label{item:21} If
 \begin{itemize}\itemsep=0pt
 \item the action is principal $($Definition~$\ref{def:princ})$, and
 \item all connected components of $M$ have dimension $\ge 3+\dim G$
 \end{itemize}
 then the space of $G$-invariant Riemannian metrics whose isometry groups leave all $G$-orbits invariant is a dense $G_{\delta}$ subset of $\mathcal{M}_G(M)$.
 \item\label{item:22} If furthermore
 \begin{itemize}\itemsep=0pt
 \item $G$ is connected,
 \item the action is free, and
 \item all connected components of $M$ also have dimension $\ge 2\dim G+1$
 \end{itemize}
 then the space of $G$-invariant metrics on $M$ whose isometry group is precisely $G$ is open dense in $\mathcal{M}_G(M)$.
 \end{enumerate}
\end{Theorem}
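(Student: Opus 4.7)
My plan for part (a) is to adapt Ebin's classical genericity argument to the equivariant setting by exploiting the quotient $M/G$, which by principality of the action is a smooth compact manifold whose components have dimension $\geq 3$. A $G$-invariant metric $g$ descends to a quotient metric $\bar g$ on $M/G$ making $\pi \colon M \to M/G$ a Riemannian submersion, and every $G$-invariant scalar curvature invariant $v(g)$---the scalar curvature $s_g$, $|\mathrm{Ric}_g|^2$, $|\nabla s_g|^2$, and so on---descends to a function $\bar v(g) \in C^{\infty}(M/G)$ preserved by every $\phi \in \mathrm{Isom}(M,g)$ in the sense that $\bar v(g)(\pi(\phi(x))) = \bar v(g)(\pi(x))$. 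The first step is to arrange, via conformal perturbations $g \mapsto e^{2\pi^*\varphi} g$ with $\varphi \in C^{\infty}(M/G)$, that $\bar s_g$ is Morse on $M/G$ with pairwise distinct critical values and that analogous generic conditions hold for a countable family of further invariants. A transversality argument in the spirit of Ebin then yields that for $g$ in a dense $G_{\delta}$ subset of $\mathcal{M}_G(M)$ some finite assembly $\bar V_g = (\bar v_1(g),\ldots,\bar v_N(g)) \colon M/G \to \mathbb{R}^N$ is injective; hence any isometry $\phi$ of $(M,g)$ satisfies $\pi(\phi(x)) = \pi(x)$ for every $x$, which is precisely the assertion that $\phi$ leaves every $G$-orbit invariant.

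For part (b), given $\phi \in \mathrm{Isom}(M,g)$, part (a) together with freeness of the action furnishes a unique smooth $a \colon M \to G$ with $\phi(x) = a(x) \cdot x$. Each orbit $G \cdot x \cong G$ inherits a left-invariant metric $h_x$, and $\phi|_{G \cdot x}$ is an isometry of $(G, h_x)$. The dimension condition $\dim M \geq 2\dim G + 1$, which gives $\dim(M/G) \geq \dim G + 1$, should via another transversality step allow me to arrange that for generic $g$ the assignment $[x] \mapsto [h_x] \in \mathrm{Sym}^2_+\mathfrak{g}^* / \mathrm{Ad}(G)$ lands on a dense open set of orbits in the locus where $\mathrm{Isom}(G,h) = G$ acts purely by left translations (this locus being open dense in $\mathrm{Sym}^2_+\mathfrak{g}^*$). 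Consequently $\phi|_{G \cdot x}$ is a left translation on each generic orbit, which forces $a$ to be constant on each such orbit, and by continuity on every orbit. Separately, applying the isometry condition to a horizontal vector $\xi \in H_x$ and using the orthogonal decomposition $T_x M = V_x \oplus H_x$ together with $G$-invariance of $g$ yields $da_x|_{H_x} = 0$ by a direct computation. Combined with orbit-constancy, $da \equiv 0$, so $a$ is constant on the connected manifold $M$ and $\phi \in G$. Openness of the resulting set of metrics follows from Myers--Steenrod compactness of isometry groups and their upper semicontinuity in the $C^{\infty}$ topology.

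The main technical obstacle I expect is the point-separation step in part (a). Ebin's scalar-curvature trick is designed to pin down finitely many critical points of a Morse function, but here one also needs the family of invariants to distinguish the regular points of $M/G$ from one another. This calls for producing enough genuinely independent $G$-invariant curvature scalars and verifying, via a jet-transversality argument inside $\mathcal{M}_G(M)$, that their joint image becomes injective for generic $g$; the hypothesis $\dim(M/G) \geq 3$ is precisely what gives this transversality enough room to succeed.
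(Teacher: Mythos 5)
The decisive gap is in part (b). Your mechanism for forcing the fibrewise map $a\colon M\to G$ to be constant along orbits rests on the claim that the set of left-invariant metrics $h$ on $G$ with $\mathrm{Isom}(G,h)$ consisting purely of left translations is open dense in the space of inner products on the Lie algebra. That is false whenever $G$ is a torus (a case the theorem must cover, e.g., $G=S^1$): every $G$-invariant metric on a free orbit of an abelian group is bi-invariant, and the inversion $\gamma\mapsto\gamma^{-1}$ is an isometry of every such metric, so no perturbation of $g$ inside $\mathcal{M}_G(M)$ can shrink the orbitwise isometry groups down to translations. Concretely, an orbit-preserving map acting fibrewise like $\gamma\mapsto\gamma_0\gamma^{-1}$ has non-constant $a$ yet satisfies your horizontal condition $da|_{H}=0$ (your computation there is fine), so nothing in your argument excludes it. Ruling out such maps requires coupling the vertical and horizontal directions rather than rigidifying each orbit internally; that is exactly what the paper does: after singling out one orbit $Gp$ on which $|R|$ is maximal and making the isotropy group of $p$ act trivially on the horizontal manifolds $H_{g,p}$ and $H_{g,q}$, it deforms only the horizontal distribution so that $T_q(Gq)^{\perp}+T_q(H_{g,p})=T_qM$ at a nearby point $q$, whence that isotropy group fixes $q$ and acts trivially on all of $T_qM$ and is therefore trivial. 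The hypothesis $\dim M\ge 2\dim G+1$ enters precisely to make this span possible; in your sketch it does no real work, which is a symptom that the mechanism is off. (Your openness argument via upper semicontinuity is essentially the paper's, modulo the size argument showing the conjugated inclusion is an equality.)

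Part (a) also has an unproved core. The point-separation step --- that for generic $g\in\mathcal{M}_G(M)$ some finite assembly of descended curvature scalars is injective on $M/G$ --- is the entire difficulty, and you only assert it. Ebin's proof provides no such jet-transversality statement: he never separates all points by invariants, but pins down finitely many points by successive, locally supported perturbations, and the paper's equivariant version does the same (one perturbation produces a unique orbit of maximal $|R|$, then a spherical perturbation of the metric on the horizontal slice, using Ebin's rigidity on the at-least-$2$-dimensional geodesic sphere $H_r$, makes the isotropy group fix that sphere pointwise and hence act trivially on all horizontal geodesics). Whether conformal factors pulled back from $M/G$ give enough freedom to achieve injectivity of a curvature assembly is a substantial claim, constrained by the universal identities among curvature invariants, and injectivity is in any case much more than is needed. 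Finally, even granting your density argument, the $G_{\delta}$ assertion concerns the set of vertical metrics itself, which merely contains your good set; a superset of a dense $G_{\delta}$ need not be $G_{\delta}$. The paper proves that part separately, exhibiting the complement as a countable union of sets $\mathcal{F}_{i,n}$ shown to be closed via an Arzel\`a--Ascoli equicontinuity argument; your proposal does not address it.
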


In Section~\ref{se:prel} we gather a number of preliminary remarks of use in the sequel (on topology, Riemannian geometry, etc.).

Section~\ref{se:vert} revolves around {\it vertical} Riemannian metrics: given an action of $G$ on $M$, these are the $G$-invariant metrics whose isometry groups preserve the $G$-orbits (as sets, not necessarily pointwise). The term is inspired by the theory of fibrations / submersions (e.g.,~\cite[Chapter~9]{bes}): the $G$-orbits are the fibers of the fibration $M\to M/G$, so the vectors tangent to the orbits are vertical in fibration-specific terminology. The main result in that section is Theorem~\ref{th:equirig}, matching ({\it \ref{item:21}}) of Theorem~\ref{th:summ} above.

Naturally, if a $G$-invariant metric on $M$ is {\it maximally rigid} in the sense that its isometry group is precisely $G$ (and not larger), it will also be vertical. For that reason, Section~\ref{se:vert} serves as preparation for Section~\ref{se:maxrig}. In the latter we focus on maximally rigid $G$-invariant metrics on~$M$. Here the main result is Theorem~\ref{th:precaut}, corresponding to part ({\it\ref{item:22}}) of Theorem~\ref{th:summ} above.

As for Question~\ref{qu:mot}, Corollary~\ref{cor:isiso} below recovers that affirmative answer for {\it connected} compact Lie groups. There are trade-offs as compared to~\cite[Theorem~3]{bd} and~\cite[Theorem~1]{sz}: while the latter make no connectedness assumptions, the Riemannian manifolds obtained here (with prescribed symmetry group) tend to have smaller dimension.

\section{Preliminaries}\label{se:prel}

We will need some background on Riemannian geometry, as covered well in numerous sources:~\cite{berg-pan,bes,doC92,kn1,pet} will do for instance (as does~\cite[Appendix A]{ck-ric} for a quick reference), and we cite some of these more precisely in the discussion below. We use some of the standard conventions. Having fixed a coordinate patch of the Riemannian manifold $M$ with coordinates
\begin{gather*}
 x^i,\qquad 1\le i\le n,\qquad n:=\dim M
\end{gather*}
we denote
\begin{itemize}\itemsep=0pt
\item by $\delta_{ij}$ the Kronecker delta, $1$ for $i=j$ and $0$ otherwise,
\item by $g_{ij}$ the Riemannian metric tensor,
\item by $g^{k\ell}$ the inverse of $g_{ij}$,
\item by $R_{ijk}^{\ell}$ the curvature $(3,1)$-tensor,
\item by
 \begin{gather}
 R_{ik} = R_{ijk}^j\label{eq:ric20}
 \end{gather}
 the Ricci $(2,0)$-tensor, with Einstein summation convention (i.e.,~summing over the repe\-ated $j$ in (\ref{eq:ric20})),
\item by $R$ (unadorned) the scalar curvature, $\operatorname{tr} R_{ij}$, a function on $M$,
\item on one occasion, by $Z_{ij}$ the {\it traceless} Ricci $(2,0)$-tensor
 \begin{gather}\label{eq:trric}
 Z_{ij} = R_{ij} - \frac{1}{n} R g_{ij}.
 \end{gather}
\end{itemize}
See for instance~\cite[Chapter~1]{bes} or~\cite[Chapter~3]{pet} for a recollection of the various notions.

We also adopt the usual convention on raising and lowering indices via $g^{k\ell}$ and $g_{ij}$ respectively, for instance as in~\cite[Section~1.42]{bes}: for a tensor $A_{\cdots}^{j -}$ we set
\begin{gather*}
 A_{i\cdots}^{-}:=g_{ij}A_{\cdots}^{j -}
\end{gather*}
(summation over the repeated $j$, as always). Analogous formulas hold for raising rather than lowering an index, with $g^{k\ell}$ in place of $g_{ij}$. We will refer again, for instance, to the Ricci $(1,1)$-tensor
\begin{gather*}
 R^i_j = g^{ik}R_{kj}
\end{gather*}
(see~\cite[Remark 1.91]{bes}). It induces an operator on each tangent space $T_pM$, $p\in M$ of a Riemannian manifold, and that operator is self-adjoint (i.e.,~symmetric with respect to the Hilbert space structure on $T_pM$ imposed by the Riemannian metric). The symmetry, concretely, simply means that
\begin{gather*}
 R^i_j = R_i^j.
\end{gather*}

``Smooth'' always means $C^{\infty}$. The main manifold under consideration (typically denoted by~$M$) can be assumed boundary-less, but various auxiliary submanifolds thereof will, in general, have boundaries or be non-compact: in those cases the arguments will be local in nature, so the non-compactness and/or presence of a boundary will not make a difference.

For a smooth manifold $M$, we follow~\cite{eb} in denoting by $\mathcal{M}:=\mathcal{M}(M)$ the space of smooth Riemannian structures on $M$. In the presence of a smooth action of a (typically compact) Lie group $G$ on $M$ we amplify this notation by writing
\begin{gather*}
 \mathcal{M}_G:=\mathcal{M}_G(M)
\end{gather*}
for the space of (always smooth) $G$-invariant Riemannian metrics on $M$.

The following piece of terminology is justified by the example of a fibration $M\to M/G$ induced by a free $G$-action on $M$, with fibers $\cong G$ regarded as ``vertical'' in a pictorial rendition of that fibration.

\begin{Definition}\label{def:vert}
 Let $G$ be a Lie group acting smoothly on the smooth manifold $M$. Vectors in~$TM$ tangent to $G$-orbits are {\it vertical}.

 A $G$-invariant Riemannian structure on $M$ is {\it vertical} if its automorphism group leaves every $G$-orbit invariant. We denote by
 \begin{gather*}
 \mathcal{M}_G^v(M)\subset \mathcal{M}_G(M)
 \end{gather*}
 the space of $G$-invariant vertical Riemannian metrics.
\end{Definition}

Note that $\mathcal{M}(M)$ is open in the {\it Polish} (i.e.,~separable completely metrizable) space $\Gamma(T^{\otimes 2}M)$ of smooth sections of the tensor square bundle $T^{\otimes 2}M$, and hence is itself Polish~\cite[Appen\-dix, Proposition~A.1]{trev}. $\mathcal{M}_G(M)$ is also Polish, for instance because it is closed in the Polish space~$\mathcal{M}(M)$~\cite[Chapter~IX, Section~6.1, Proposition~1a)]{bour-top-2}.

Recall:

\begin{Definition}\label{def:cat1}
 A subset of a topological space is
 \begin{itemize}\itemsep=0pt
 \item {\it meager} or {\it of first category} if it is a countable union of nowhere dense sets,
 \item {\it non-meager} or {\it of second category} if it is not meager,
 \item {\it residual} if its complement is meager.
 \end{itemize}

 A topological space is {\it Baire} (or a {\it Baire space}) if meager sets have empty interior.
\end{Definition}
Cf.~\cite[Definitions 11.6.1 and 11.6.5]{nb}.

According to the Baire category theorem~\cite[Theorem~11.7.2]{nb} complete metric spaces are Baire. Since $\mathcal{M}_G(M)$ is Polish, that result allows us to regard residual subsets thereof as ``large'': they are certainly dense, but being residual says more than that.

As usual (e.g.,~\cite[Section~1.3]{eng-top}) {\it $F_{\sigma}$-subsets} of a topological space are countable union of~clo\-sed subsets, while {\it $G_{\delta}$-subsets} are countable intersections of open subsets. Their relevance here stems from the fact that in a Baire space a countable intersection of open dense subsets is residual and hence dense.

\section{Vertical metrics}\label{se:vert}

The reader might find~\cite{mb-gps,bred-gps,kob-gps} particularly useful in parsing the material below, on Lie-group actions on manifolds.

Theorem~\ref{th:equirig} below is an equivariant version of the result that ``generically'', Riemannian manifolds are rigid (i.e.,~have trivial isometry groups); this is~\cite[Proposition~8.3]{eb}, which can be recovered from Theorem~\ref{th:equirig} by setting $G=\{1\}$. Recall Definition~\ref{def:vert} above for notation. We also need the following notion (see, e.g.,~\cite[Proposition~I.2.5 and the discussion following it, and Remark I.2.7]{aud}).

\begin{Definition}\label{def:princ}
 Let $G$ be a compact Lie group acting smoothly on a smooth manifold $M$ so that $M/G$ is connected.
 \begin{enumerate}[($a$)]\itemsep=0pt
 \item An orbit $Gp$ is {\it principal} if either of the two following equivalent conditions holds:
 \begin{itemize}\itemsep=0pt
 \item the points $q\in M$ whose isotropy groups $G_q$ are in the same conjugacy class as $G_p$ form a dense open subset of $M$,
 \item the action of $G_p$ on the quotient $T_pM/T_p(Gp)$ of tangent spaces is trivial.
 \end{itemize}
 \item The action is {\it principal} if all of its orbits are.
 \item In general (i.e.,~for possibly-disconnected $M/G$), the {\it components} of the action are the actions of $G$ on the preimages of the connected components of $M/G$. Every orbit is an orbit of some component, and hence the notion of principality makes sense for orbits in full generality.
 \item Similarly, a general action is principal if its components are.
 \end{enumerate}
\end{Definition}

\begin{Theorem}\label{th:equirig}
 Let $M$ be a compact smooth manifold equipped with a principal $G$-action by a~compact Lie group $G$. If
 \begin{gather}\label{eq:dimbd}
 \dim M_i\ge 3+\dim G
 \end{gather}
 for every connected component $M_i\subset M$ then the space $\mathcal{M}^v_G(M)$ of $G$-invariant vertical Riemannian metrics on $M$ in the sense of Definition~$\ref{def:vert}$ is a dense $G_{\delta}$-subset of $\mathcal{M}_G(M)$.
\end{Theorem}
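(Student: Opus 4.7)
The argument naturally splits into verifying the $G_\delta$ property and density separately.

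For the $G_\delta$ part, the plan is to write the complement $\mathcal{M}_G(M)\setminus\mathcal{M}_G^v(M)$ as a countable union of closed sets. Since the action is principal, $M/G$ is a smooth manifold, so one may fix any auxiliary distance $d$ on it. For each $n\ge 1$, I would consider
\[
F_n := \bigl\{g\in\mathcal{M}_G(M):\ \exists\,\phi\in\mathrm{Isom}(M,g),\ \exists\,p\in M,\ d\bigl(\overline{\phi(p)},\bar p\bigr)\ge 1/n\bigr\},
\]
where $\overline{q}$ denotes the image of $q\in M$ in $M/G$. Closedness of $F_n$ should follow from the standard Myers--Steenrod/Arzel\`a--Ascoli argument: if $g_k\to g$ in $C^\infty$ with witnessing pairs $(\phi_k,p_k)$, then $(\phi_k)$ admits a $C^\infty$-convergent subsequence with limit $\phi\in\mathrm{Isom}(M,g)$, $(p_k)$ admits a convergent subsequence $p_k\to p$ by compactness of $M$, and the inequality survives the limit.

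For density, my strategy is to use isometry-invariant scalar quantities built from the curvature to separate $G$-orbits. The scalar curvature $R_g$ is $G$-invariant and satisfies $R_g\circ\phi = R_g$ for every $\phi\in\mathrm{Isom}(M,g)$, so it descends to $\overline{R}_g\colon M/G\to\mathbb{R}$. Appending higher-order intrinsic scalars (e.g.\ $|\mathrm{Ric}_g|^2$, elementary symmetric functions of the Ricci eigenvalues, and further curvature contractions) yields a smooth map
\[
\overline{F}_g\colon M/G\longrightarrow\mathbb{R}^k
\]
invariant under all isometries of $g$. If $\overline{F}_g$ is injective, then any isometry $\phi$ of $(M,g)$ must satisfy $\overline{\phi(p)}=\bar p$ for every $p$, hence $\phi$ preserves every $G$-orbit and $g$ is vertical. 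Density will therefore reduce to arranging injectivity of $\overline{F}_{g'}$ for a dense set of $g'\in\mathcal{M}_G(M)$.

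To carry this out, I plan to run a Whitney-style transversality argument on $M/G$. The available $G$-invariant perturbations---conformal rescalings $g\mapsto e^{2f}g$ with $G$-invariant $f$, together with perturbations of the horizontal part of $g$---should provide ample freedom to adjust the invariants $\overline{F}_g$ locally in any prescribed direction near any pair of distinct orbits. Combining such pairwise separations with a Baire-category exhaustion over a countable dense collection of orbit pairs (and intersecting with the $G_\delta$ established above) will yield density. The hypothesis $\dim M_i\ge 3+\dim G$, equivalently $\dim(M_i/G)\ge 3$, enters precisely here, to ensure that generic smooth maps from $M/G$ into sufficiently-high-dimensional Euclidean space are embeddings.

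The principal obstacle, as I anticipate it, is the transversality step itself: showing that $G$-invariant perturbations of $g$ really do move the curvature invariants $\overline{F}_g$ independently at any two distinct orbits $Gp\ne Gq$, so that pairwise separation is an open dense condition on $\mathcal{M}_G(M)$. This will require a careful pointwise jet-level comparison of how scalar/Ricci invariants respond to arbitrary $G$-invariant tensor perturbations on the principal bundle $M\to M/G$. Once transversality is established, the local-to-global passage and the Baire category conclusion are essentially routine.
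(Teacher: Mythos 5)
Your $G_{\delta}$ half is fine and is essentially the paper's own argument: the complement of $\mathcal{M}_G^v(M)$ is exhibited as a countable union of sets that are closed by an Arzel\`a--Ascoli/Myers--Steenrod compactness argument for isometries of nearby metrics. (One small point to record: you need $M/G$ to be metrizable and the orbit map proper so that $d(\overline{\phi(p)},\bar p)=0$ forces $\phi(p)\in Gp$; for a principal action of a compact group this is harmless.)

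The density half, however, has a genuine gap, and it sits exactly where you flag ``the principal obstacle'': the claim that $G$-invariant perturbations of $g$ move the curvature-invariant map $\overline{F}_g\colon M/G\to\mathbb{R}^k$ freely enough to make it injective for a dense set of metrics. This is not a routine transversality step, because $\overline{F}_{g'}$ is not an arbitrary perturbation of $\overline{F}_g$: it is constrained to consist of curvature scalars of a nearby $G$-invariant metric, and the reachable variations can degenerate. The case $M=\mathbb{S}^1$, $G=\{1\}$ (where the theorem genuinely fails, as the paper's remark notes) makes this concrete: every curvature invariant of every metric vanishes identically, so $\overline{F}_g$ is constant no matter how you perturb, and no amount of ``Whitney-style'' genericity is available. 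This also shows that your stated role for the hypothesis $\dim M_i\ge 3+\dim G$ is off target: genericity of embeddings into $\mathbb{R}^k$ only requires $k\ge 2\dim(M/G)+1$, which you can arrange by adding invariants regardless of whether $\dim(M/G)\ge 3$; as written, your scheme would ``prove'' the theorem under weaker dimension bounds where it is false, so the real work --- quantifying how much the invariants can actually be moved, and where the dimension enters --- is missing rather than deferred. For comparison, the paper needs much less than global separation of orbits: it perturbs (by a local $G$-invariant conformal bump, after first arranging nonvanishing scalar curvature) so that $\max|R|$ is attained on a \emph{single} orbit $Gp$, which is then automatically preserved; it then deforms the metric spherically on a small horizontal geodesic sphere $H_r$ through $p$ so that $H_r$ carries a rigid metric (this is where $\dim M_i\ge 3+\dim G$ is used, to make $\dim H_r\ge 2$ so Ebin's density of rigid metrics applies), forcing the isotropy group $\cat{aut}(g')_p$ to act trivially on horizontal geodesics and hence to preserve every orbit. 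If you want to salvage your route, you would need to prove the separation statement for the invariants map at the level of pairs of orbits, with an honest analysis of the image of the linearized map $g\mapsto\overline{F}_g$ restricted to $G$-invariant perturbations --- and that analysis, not the Baire bookkeeping, is the theorem.
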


\begin{Remark}
 As observed in the statement of~\cite[Proposition~8.3]{eb}, some requirement (\ref{eq:dimbd}) on dimensions is necessary: when $G$ is trivial, the circle $M\cong \mathbb{S}^1$ has isometry group $O(2)$ for~any Riemannian metric. Since in this case `vertical' simply means `with trivial automorphism group', there are no vertical metrics at all.

 This example rules out a $1+$ correction term in (\ref{eq:dimbd}), but not a $2+$ correction term; indeed, I do not know whether the result is sharp in this sense.
\end{Remark}

\begin{proof}[Proof of Theorem~\ref{th:equirig}: the $\boldsymbol{G_{\delta}}$ claim]
 Let $\cat{orb}_i$, $i\in \mathbb{Z}_{\ge 0}$ be a countable set of orbits that is dense in $M/G$, and $U_n$, $n\in \mathbb{Z}_{\ge 0}$ a countable set of $G$-invariant open subsets of $M$ which constitute fundamental systems of neighborhoods of the $\cat{orb}_i$. Set
 \begin{gather*}
 \mathcal{F}_{i,n}:=\{g\in \mathcal{M}_G(M)\,|\, \text{the automorphism group }\cat{aut}(g)\text{ moves some point of }\cat{orb}_i
 \\ \hphantom{\mathcal{F}_{i,n}:=\{}
 \text{ out of }U_n\}.
 \end{gather*}
 We now have
 \begin{enumerate}[$(i)$]
 \item\label{item:23} {\it Each $\mathcal{F}_{i,n}$ is closed.} To sketch this briefly, let
 \begin{gather*}
 \mathcal{F}_{i,n}\ni g_{\alpha}\underset{\alpha}{\longrightarrow} g\in \mathcal{M}_G(M)
 \end{gather*}
 be a convergent net. By assumption, each $g_{\alpha}$ admits an automorphism $\gamma_{\alpha}$ that moves some point $p_{\alpha}\in \cat{orb}_i$ outside $U_n$. Because for large enough $\alpha$ the metrics $g_{\alpha}$ are uniformly close to $g$, so are the global geodesic metrics they induce on $M$, and hence the union of all isometry groups of $g_{\alpha}$ (again, assuming large $\alpha$) will be {\it equicontinuous}~\cite[Section~45, Definition]{mun}.

 It now follows from the Arzela--Ascoli theorem (e.g., in the variant appearing as~\cite[Theorem~47.1]{mun}) that $\{\gamma_{\alpha}\}$ is relatively compact, as is $\{\gamma_{\alpha}^{-1}\}$ in the uniform topology on self-maps of $M$. This implies that we can find a subnet $\gamma_{\beta}$ convergent to an isometry $\gamma$ of $g$. Further passing to a subnet thereof if necessary, we can furthermore assume that $p_{\beta}$ converges to some point $p$ which of course still belongs to the closed set $\cat{orb}_i$. Finally,
 \begin{gather*}
 M\setminus U_n\ni \gamma_{\beta} p_{\beta} \to \gamma p
 \end{gather*}
 and hence this latter point will again belong to the closed set $M\setminus U_n$.
 \item\label{item:24} {\it The complement $\mathcal{M}_G(M)\setminus \mathcal{M}^v_G(M)$ is the union of the $\mathcal{F}_{i,n}$.}
 \end{enumerate}
 Jointly, ({\it \ref{item:23}}) and ({\it \ref{item:24}}) imply the desired $G_{\delta}$-ness conclusion.
\end{proof}

The proof of (the rest of) Theorem~\ref{th:equirig} will require some preparation, in part to recall, somewhat informally, the proof strategy for~\cite[Proposition~8.3]{eb}. That proof proceeds as follows.

\begin{enumerate}[$(a)$]\itemsep=0pt
\item
An arbitrary Riemannian metric $g_{ij}$ on $M$ is first perturbed slightly so that the maximum over $p\in M$ of the largest eigenvalue
 \begin{gather*}
 \max~\cat{spec}(\cat{ric}(p))
 \end{gather*}
 of the symmetric operator
 \begin{gather*}
 \cat{ric}(p):=R^i_j(p)\colon T_pM\to T_pM
 \end{gather*}
 is achieved at a unique point $p$, and the perturbation is confined to an arbitrarily small neighborhood $U$ of $p$.
\item
With this in hand, every isometry of $M$ with respect to the new metric will fix that unique point $p$.
\item
The procedure is repeated on small spheres around $p$ avoiding $U$, ensuring that the maximal eigenvalue of $R^i_j$ on such a sphere is achieved at a unique point, which will then again be fixed by every Riemannian isometry.
\item
Repeating the procedure a large (but finite) number of times, one obtains a metric whose isometry group fixes at least $\dim M+1$ ``independent'' points of $M$. It then follows that the isometry group must be trivial (e.g.,~\cite[Theorem~3]{ms}).
\end{enumerate}

The proof of Theorem~\ref{th:equirig} appearing below follows essentially the same plan, with some modifications. For one thing, in place of the maximal eigenvalue we consider other numerical invariants of a self-adjoint operator on a (real) Hilbert space:

\begin{Notation}
 Let $T\colon \mathbb{R}^n\to \mathbb{R}^n$ be a symmetric operator. We write
 \begin{itemize}\itemsep=0pt
 \item $\|T\|$ for its {\it norm} with respect to any real Hilbert space structure on $\mathbb{R}^n$; it is the largest $|\lambda|$ for $\lambda$ ranging over the spectrum $\cat{spec}(T)$.
 \item $\cat{spr}(T)$ for the {\it spread} of $T$, i.e.,~the length of the smallest interval containing $\cat{spec}(T)$.
 \end{itemize}
\end{Notation}

We will be interested in maximizing the norm or spread of the operators $\cat{ric}(p)$ instead. Note that in general, for a Riemannian manifold $M$,
\begin{gather*}
\max_{p\in M}~\cat{spr}(\cat{ric}(p))=0
\end{gather*}
precisely when each operator $\cat{ric}(p)$ is a scalar multiple of the identity or, equivalently, the Ricci $(2,0)$-tensor $R_{ij}$ is a ``conformal multiple'' of the metric $g_{ij}$:
\begin{gather}\label{eq:confeins}
 \forall\, p\in M,\qquad R_{ij}(p) = f(p) g_{ij}(p)
\end{gather}
for some function $f\colon M\to \mathbb{R}$. Assuming $M$ is connected, this is
\begin{itemize}\itemsep=0pt
\item no restriction at all when $\dim M=2$ (i.e.,~it is automatic)~\cite[Remark 1.96(a)]{bes},
\item equivalent to $M$ being an {\it Einstein manifold} when $\dim M \ge 3$, i.e.,~the function $f$ in (\ref{eq:confeins}) is in fact constant~\cite[Theorem~1.97]{bes}.
\end{itemize}

\begin{Notation}\label{not:ncr}
 For a smooth manifold $M$ equipped with a smooth action by a Lie group $G$ we introduce the following notation.
 \begin{itemize}\itemsep=0pt
 \item $\mathcal{NSR}_G(M)\subset \mathcal{M}_G(M)$ is the set of $G$-invariant Riemannian structures satisfying
 \begin{gather*}
 \cat{spr}(\cat{ric}(p))>0 \qquad\text{over a dense set of}\quad p\in M.
 \end{gather*}
 The symbol stands for ``non-scalar Ricci'', based on the fact that $\cat{spr}(\cat{ric}(p))$ vanishes precisely when the operator $\cat{ric}(p)\colon T_pM\to T_pM$ is a scalar multiple of the identity.
 \item Similarly, $\mathcal{NZR}_G(M)$ (for ``non-zero'') is the set of $G$-invariant Riemannian structures such that
 \begin{gather*}
 \cat{ric}(p)\ne 0 \qquad\text{over a dense set of}\quad p\in M.
 \end{gather*}
 \item $\mathcal{NZS}_G(M)$ (for ``non-zero scalar'') is the set of $G$-invariant Riemannian structures such that
 \begin{gather*}
 R(p)=\mathrm{tr}(\cat{ric}(p))\ne 0 \qquad\text{over a dense set of}\quad p\in M.
 \end{gather*}
 \item For a subset $U\subseteq M$, we write $\mathcal{SR}^U_G(M)$ for the set of $G$-invariant structures for which
 \begin{gather*}
 \cat{spr}(\cat{ric}(p))=0,\qquad \forall\, p\in U
 \end{gather*}
 (i.e.,~the Ricci tensor is scalar along $U$).
 \item Finally, we set
 \begin{gather*}
 \mathcal{SR}_G(M):=\mathcal{SR}_G^M(M).
 \end{gather*}
 \end{itemize}
\end{Notation}

\begin{Proposition}\label{pr:ncrgen}
 Let $M$ be a smooth compact manifold with connected components $M_i$, and equipped with a smooth action by a compact Lie group $G$.
 \begin{enumerate}[$(a)$]\itemsep=0pt
 \item\label{item:5} If
 \begin{gather*}
 \dim M_i\ge 2+\dim G,\qquad \forall\, i
 \end{gather*}
 the set $\mathcal{NZR}_G(M)$ is residual in $\mathcal{M}_G(M)$ in the sense of Definition~$\ref{def:cat1}$.
 \item\label{item:7} The same goes for $\mathcal{NZS}_G(M)$.
 \item\label{item:6}
 If furthermore we have $\dim M_i\ge 3$ for all $i$ then the space $\mathcal{NSR}_G(M)$ is residual in $\mathcal{M}_G(M)$ in the sense of Definition~$\ref{def:cat1}$.
 \end{enumerate}
\end{Proposition}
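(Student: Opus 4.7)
The plan is to write each of $\mathcal{NZR}_G(M)$, $\mathcal{NZS}_G(M)$, and $\mathcal{NSR}_G(M)$ as a countable intersection of open dense subsets of the Polish space $\mathcal{M}_G(M)$, so that Baire (Section~\ref{se:prel}) delivers the claimed residuality. Fix a countable basis $\{U_n\}_{n\ge 1}$ of $G$-invariant open subsets of $M$, obtained by pulling back a basis of the second-countable quotient $M/G$. Set
\begin{gather*}
 \mathcal{O}_n^{\cat{ric}}:=\{g\in\mathcal{M}_G(M) : \cat{ric}_g(p)\ne 0\ \text{for some}\ p\in U_n\},
\end{gather*}
and define $\mathcal{O}_n^R$ and $\mathcal{O}_n^{\cat{spr}}$ analogously for $R(g)(p)\ne 0$ and $\cat{spr}(\cat{ric}_g(p))>0$. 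These are open because all relevant curvature quantities depend continuously on $g$ in the $C^{\infty}$ topology; since each ``dense in $M$'' condition from Notation~\ref{not:ncr} amounts to ``meets every $U_n$'', unwrapping yields $\mathcal{NZR}_G(M)=\bigcap_n\mathcal{O}_n^{\cat{ric}}$, $\mathcal{NZS}_G(M)=\bigcap_n\mathcal{O}_n^R$, and $\mathcal{NSR}_G(M)=\bigcap_n\mathcal{O}_n^{\cat{spr}}$. What is left is density of each piece.

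For density, I would run a $G$-invariant conformal perturbation. Given $g_0\in\mathcal{M}_G(M)$ for which the relevant curvature of $g_0$ vanishes on $U_n$ (else we are already done), pick a principal-orbit point $p\in U_n$ (available since principal orbits are open dense) and invoke the slice theorem to get a $G$-equivariant tubular neighborhood of $Gp$ modeled on $G\times_{G_p}D$, with $D\subset T_p(Gp)^\perp$ a disk on which $G_p$ acts trivially; any smooth $\psi$ on $D$ thus extends to a $G$-invariant $\phi\in C^{\infty}(M)$ supported inside the tube. For the path $g_t:=e^{2t\phi}g_0\subset\mathcal{M}_G(M)$ the standard conformal-change formulas (e.g.,~\cite[1.159]{bes}) give
\begin{gather*}
 \partial_t\big|_{t=0}R(g_t)(q)=-2(n-1)\Delta_{g_0}\phi(q),\\
 \partial_t\big|_{t=0}Z_{ij}(g_t)(q)=-(n-2)\bigl(\nabla_i\nabla_j\phi-\tfrac{1}{n}\Delta_{g_0}\phi\cdot g_{0,ij}\bigr)(q),
\end{gather*}
where $Z$ is the traceless Ricci of~(\ref{eq:trric}); the first uses the assumed vanishing of $R(g_0)$ on $U_n$, while the second holds identically. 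Producing a $\phi$ that makes the relevant right-hand side nonzero at $q=p$ will then place $g_t$ in $\mathcal{O}_n^{\ast}$ for all sufficiently small $t\ne 0$, and density follows from $g_t\to g_0$ in $C^{\infty}$.

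Such $\phi$ is built from a radial bump on $D$. Under the dimension bound $\dim M_i\ge 2+\dim G$---which applies in~(\ref{item:5}), (\ref{item:7}), and (combined with $\dim M_i\ge 3$) in~(\ref{item:6})---the slice satisfies $\dim D=\dim M_i-\dim Gp\ge 2$, and for $\phi$ with $d\phi(p)=0$ a brief slice-chart computation shows that $\nabla^2\phi(p)$ vanishes on the vertical subspace $T_p(Gp)$ and reproduces the Euclidean Hessian of the radial profile on $D$. For part~(\ref{item:7}), choose the profile so this slice-Hessian has nonzero $g_0|_D$-trace, giving $\Delta_{g_0}\phi(p)\ne 0$. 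For part~(\ref{item:6}), since $\dim D\ge 2$, arrange instead for the slice-Hessian to be non-proportional to $g_0|_D$; the resulting $\nabla^2\phi(p)$---zero on the vertical block, non-scalar on the horizontal one---is then not a scalar multiple of $g_0(p)$, so $Z(g_t)(p)\ne 0$ for small $t$. Part~(\ref{item:5}) requires no separate argument: pointwise one has $\{R\ne 0\}\subseteq\{\cat{ric}\ne 0\}$, so $\mathcal{NZS}_G(M)\subseteq\mathcal{NZR}_G(M)$, and residuality passes to supersets. The subtlest step I expect is verifying the orbit/slice decoupling of $\nabla^2\phi(p)$ at a principal critical point of a $G$-invariant $\phi$; once this is in hand the rest is routine bookkeeping with the conformal variation formulas and Baire category.
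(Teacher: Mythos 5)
Your proposal is correct and takes essentially the same route as the paper: a Baire-category decomposition over a countable basis of $G$-saturated open sets, with density of each piece obtained from a $G$-invariant conformal perturbation localized near a principal orbit via the slice theorem, using Besse's conformal-change formulas for the scalar curvature and the traceless Ricci tensor together with the same dimension counts ($\dim D\ge 2$ and $n\ge 3$ so that the factor $n-2$ survives). The only cosmetic differences are that you phrase the perturbation as a first variation along $g_t=e^{2t\phi}g_0$ rather than a one-shot conformal factor, and that you deduce part~($a$) from the pointwise inclusion $\{R\ne 0\}\subseteq\{\cat{ric}\ne 0\}$, which is if anything slightly cleaner than the paper's deduction from parts~($b$) and~($c$).
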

\begin{proof}
 We prove ({\it \ref{item:6}}), while only very briefly sketching how the (simpler) proofs for parts ({\it \ref{item:5}}) and ({\it \ref{item:7}}) can be adapted from this.

 ({\it \ref{item:6}}) The complement
 \begin{gather*}
 \mathcal{M}_G(M)\setminus \mathcal{NSR}_G(M)
 \end{gather*}
 is the union, over all open $U\subseteq M$, of the sets $\mathcal{SR}^U_G(M)$ introduced in Notation~\ref{not:ncr}. Since we can furthermore range $U$ over some countable base for the topology of $M$, it will be enough to prove that for every non-empty open $U$ the set $\mathcal{SR}^U_G(M)$ is nowhere dense in $\mathcal{M}_G(M)$. Since that set is closed, what we want to argue is that it has empty interior. In other words:

 \begin{Claim}
A metric $g\in \mathcal{SR}^U_G(M)$ has arbitrarily small deformations outside that set.
 \end{Claim}

 We can see this by effecting a conformal deformation
 \begin{gather*}
 g\mapsto g':=\varphi^{-2} g,
 \end{gather*}
 where $\varphi$ is a strictly positive, $G$-invariant function on $M$ that is $C^{\infty}$-close to the constant function $1$.

 We can assume that $U$ is $G$-invariant. According to the slice theorem for $G$-actions (e.g.,~\cite[Th\'eor\`eme, p.~139]{ksz},~\cite[Theorem~1]{my} or~\cite[Theorem~I.2.1]{aud} among others) every point $p\in U$ has a~$G$-invariant ``tubular'' neighborhood contained in $U$, $G$-equivariantly diffeomorphic to $G\times_{G_p}V$, where
 \begin{itemize}\itemsep=0pt
 \item $G_p\subseteq G$ is the isotropy group at $p$,
 \item $V$ is the quotient space $T_pM/T_p (Gp)$ ($Gp$ being the orbit through $p$),
 \item the $G_p$-action on $V$ is the differential of the $G_p$-action on $M$ obtained by restricting that of $G$.
 \end{itemize}
 Furthermore, it follows from~\cite[Proposition~I.2.5]{aud} that there is a dense set of points $p$ for which the linear action of $G_p$ on $V$ is trivial (i.e.,~those lying on principal orbits in the sense of~Definition~\ref{def:princ}). For such a $p\in U$ (which we henceforth fix), the tubular neighborhood $G\times_{G_p}V$ is in fact diffeomorphic to the product manifold $Gp\times V$; we frequently identify the two in the discussion below. We can then select our scaling function $\varphi$ so that
 \begin{itemize}\itemsep=0pt
 \item it is identically $1$ outside some $G$-invariant neighborhood of $Gp$ whose closure is contained in $Gp\times V$,
 \item on $Gp\times V$ it depends only on local coordinates on $V$, and is thus $G$-invariant.
 \end{itemize}

 Additionally, we have to choose $\varphi$ so as to achieve the desired outcome that $g'$ have non-scalar Ricci $(1,1)$-tensor in $U$. By~\cite[equation (1.161b)]{bes} the conformal transformation rules for the traceless Ricci tensor (\ref{eq:trric}) are of the form
 \begin{gather}\label{eq:cnfrmz}
 Z' = Z + (\text{some multiple of }g) + \frac{\dim M-2}{\varphi}\operatorname{Hess}(\varphi),
 \end{gather}
 where $\operatorname{Hess}$ denotes the {\it Hessian} defined \cite[Section~1.54]{bes} as a $(2,0)$-tensor by
 \begin{gather*}
 \operatorname{Hess}(\varphi)(X,Y) = X(Y\varphi) - (\nabla_XY)(\varphi),
 \end{gather*}
 where $\nabla$ denotes the Levi-Civita connection (denoted by the same symbol in~\cite[Section~IV.2]{kn1} and by $D_{X}Y$ in~\cite[Section~1.41]{bes}).

 Since $\dim M\ge 3$, it will be enough to choose $\varphi$ so that Hessian fails to be a scalar multiple of the metric $g$ at some point in $U$. In {\it normal} \cite[Section~1.44]{bes} local coordinates $\operatorname{Hess}(\varphi)$ is expressible as the familiar Hessian matrix with entries
 \begin{gather}\label{eq:2ndders}
 \operatorname{Hess}(\varphi)_{i,j} = \frac{\partial^2 \varphi}{\partial x^i \partial x^j}.
 \end{gather}
 Since (with $M_i\subset M$ being the component that contains $p$) we have
 \begin{gather*}
 \dim V = \dim M_i - \dim Gp\ge \dim M_i-\dim G\ge 2
 \end{gather*}
 by assumption, we can certainly arrange for second partial derivatives with respect to the coordinates $x^i$ on $V$ so that the bilinear form with matrix (\ref{eq:2ndders}) is not a scalar multiple of $(g_{ij})_{i,j}$. This proves the claim and hence the result.

 ({\it \ref{item:7}}) We can follow the same strategy as above, this time replacing (\ref{eq:cnfrmz}) with its scalar-curvature version \cite[Theorem~1.159(f)]{bes}: if we conformally scale the metric $g$ to $g'=e^{2f}g$ then the relation between the two scalar curvatures $R'$ (new) and $R$ (old) is
 \begin{gather*}
 R' = e^{-2f}\left(R + 2(n-1)\Delta f - (n-2)(n-1)|{\rm d}f|^2\right),
 \end{gather*}
 where
 \begin{itemize}\itemsep=0pt
 \item $n$ is the dimension of the underlying manifold,
 \item $\Delta f$ is the {\it Laplacian} of $f$~\cite[Section~1.54c]{bes},
 \item $|{\rm d}f|$ denotes the length of the {\it gradient} of $f$~\cite[Section~1.54a]{bes} in the metric $g$.
 \end{itemize}

 ({\it \ref{item:5}}) This follows from parts ({\it \ref{item:7}}) and ({\it \ref{item:6}}): the former trivially covers components of dimen\-sion~\mbox{$\ge 3$}, whereas the latter ensures non-vanishing on dimension-2 components, where $R_{ij} = \frac 12 Rg_{ij}$.
\end{proof}

Lemma~\ref{le:1invorb} implements ({\it \ref{item:5}}) (and ({\it \ref{item:7}})) in the above discussion, following the statement of Theo\-rem~\ref{th:equirig}; its proof is very much in the spirit of that of~\cite[Proposition~8.3]{eb}.

\begin{Lemma}\label{le:1invorb}
 Let $G$ be a compact Lie group acting smoothly and isometrically on a Riemannian manifold $(M,g)$ with components of dimension $\ge 2+\dim G$. Then, there is a point $p\in M$ such that
 \begin{itemize}\itemsep=0pt
 \item one can find $G$-invariant metrics $g'$ on $M$ arbitrarily close to $g$,
 \item achieving the maximal absolute value of its scalar curvature on a unique $G$-orbit in an~arbi\-trarily small $G$-invariant neighborhood $U$ of $Gp$, and hence,
 \item so that the isometry group $\cat{aut}(g')$ leaves that orbit invariant.
 \end{itemize}
 Moreover, if $g\not\in \mathcal{SR}_G(M)$ then we can ensure $g'= g$ outside the arbitrarily-small neighborhood~$U$ of $Gp$.
\end{Lemma}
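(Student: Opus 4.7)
My plan is to produce $g'$ via a small $G$-invariant conformal rescaling $g' := e^{2\epsilon\varphi}g$ supported in $U$, chosen so that the smooth $G$-invariant function $R_{g'}$ attains $\sup_M|R_{g'}|$ uniquely on the orbit $Gp$. Once this is secured the third bullet is automatic: any $g'$-isometry preserves the scalar curvature $R_{g'}$, hence permutes its level sets, so in particular fixes the singleton extremal orbit. The conformal transformation formula recalled in the proof of Proposition~\ref{pr:ncrgen}(\ref{item:7}) reads
\[
R_{g'} = e^{-2\epsilon\varphi}\bigl(R_g + 2(n-1)\epsilon\,\Delta_g\varphi - (n-2)(n-1)\epsilon^2|d\varphi|_g^2\bigr),
\]
so wherever $\varphi$ vanishes the leading-order change in $R$ is $2(n-1)\epsilon\,\Delta_g\varphi$.

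Set $M_0 := \sup_M|R_g|$. If $M_0 = 0$, first apply Proposition~\ref{pr:ncrgen}(\ref{item:7}) to replace $g$ by an arbitrarily $C^\infty$-close metric with $M_0>0$; that proof's construction is itself localizable in $U$ when $g\notin\mathcal{SR}_G(M)$, which is what permits the ``moreover'' strengthening. By density of the principal locus (and, if necessary, a further arbitrarily small preliminary perturbation to move the max onto a principal orbit), pick $p$ on a principal $G$-orbit with $|R_g(p)| = M_0$. The slice theorem at $p$ provides a $G$-invariant tubular neighborhood of $Gp$ diffeomorphic to $Gp\times V$, with $V := T_pM/T_p(Gp)$ and $\dim V = \dim M_i - \dim Gp \ge \dim M_i - \dim G \ge 2$. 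Shrink $U$ to fit inside this tube and inside the open set where $R_g$ has constant sign (so that $|R_g|$ and $\sigma R_g$ coincide there), set $\sigma := \mathrm{sign}(R_g(p)) \in \{\pm 1\}$ (either choice if $M_0 = 0$), and in $g$-normal coordinates $(x^i)$ on $V$ at $p$ pick $\varphi$ smooth, $G$-invariant (depending only on $(x^i)$ on $V$), supported in $U$, and equal near $p$ to $\sigma\bigl(\|x\|^2 - \|x\|^4\bigr)$ truncated by a radial cutoff further out.

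Then $\varphi(p) = 0$, and the Euclidean Laplacian --- which by (\ref{eq:2ndders}) agrees with $\Delta_g\varphi$ near $p$ up to lower-order terms in the normal coordinates --- gives $\sigma\,\Delta_g\varphi(x) = 2\dim V - 4(\dim V + 2)\|x\|^2 + \text{l.o.t.}$, attaining a strict maximum at $p$ over the support of $\varphi$ once the cutoff is chosen suitably. For small $\epsilon>0$ the conformal formula yields $\sigma R_{g'}(p) = M_0 + 4(n-1)\dim V\cdot\epsilon + O(\epsilon^2)$, so $|R_{g'}(p)| > M_0$. Outside $U$, $\varphi\equiv 0$ and $R_{g'}\equiv R_g$, so $\sup_{M\setminus U}|R_{g'}|\le M_0 < |R_{g'}(p)|$. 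For orbits $Gq$ in $U$ with $q$ projecting to $x\ne 0$ in $V$, combining the bound $\sigma[R_g(q) - R_g(p)] \le 0$ (from $\sigma R_g$ attaining its maximum $M_0$ at $p$) with the strict decrease $\sigma[\Delta_g\varphi(q) - \Delta_g\varphi(p)] < 0$ yields $\sigma[R_{g'}(q) - R_{g'}(p)] < 0$ for small $\epsilon$, i.e., $|R_{g'}(q)| < |R_{g'}(p)|$. Thus $|R_{g'}|$ attains its global maximum on $M$ on the unique orbit $Gp\subset U$, and $g'\to g$ in $C^\infty$ as $\epsilon\to 0$.

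The main difficulty is the quantitative control needed for the uniqueness conclusion on $U\setminus Gp$: one must show that the strict maximum of $\sigma\Delta_g\varphi$ at $p$ dominates the $O(\epsilon^2)$ remainder of the conformal formula and the Taylor deficit of $R_g$ near $p$ uniformly over the annular region in $V$, which is routine once $\|\varphi\|_{C^2}$ is fixed and $\epsilon$ is shrunk appropriately. The dimensional hypothesis $\dim M_i \ge 2+\dim G$ is used exactly to guarantee $\dim V \ge 2$, so that the radial bump $\sigma(\|x\|^2-\|x\|^4)$ furnishes a nontrivial strict extremum of $\Delta\varphi$ transverse to $Gp$.
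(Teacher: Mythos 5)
Your overall architecture (a small $G$-invariant conformal bump supported in $U$, the observation that isometries preserve scalar curvature, and the localization that yields the ``moreover'' clause) is sound and close in spirit to the paper's first step, but the paper does \emph{not} attempt to achieve uniqueness of the maximizing orbit in one shot: it first scales $g$ by a constant conformal factor on a small tube around $Gp$ to push the maximum of $|R|$ strictly into $U$, and then obtains uniqueness by Ebin's iteration (choose a maximizer $q_1\in U$, a neighborhood $U_1$ of half the size, a perturbation of half the size supported there, and so on), the maximizing orbit of the $C^\infty$-limit being the intersection of the nested neighborhoods. Your one-shot replacement of that iteration is exactly where the gap lies. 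First, the concrete choice of $\varphi$ does not do what you claim: with $\varphi=\sigma\bigl(\|x\|^2-\|x\|^4\bigr)\chi(\|x\|)$, the Laplacian in the cutoff annulus contains the terms $2(2r-4r^3)\chi'(r)$ and $(r^2-r^4)\bigl(\chi''+\tfrac{d-1}{r}\chi'\bigr)$, of size comparable to $\|\chi'\|_\infty$, $\|\chi''\|_\infty$, and these generically exceed $2\dim V$ with the favorable sign somewhere in the annulus. This is fatal precisely in the worst case the lemma must handle: the preliminary $\mathcal{NZS}$ step does not exclude $|R_g|\equiv M_0$ on $U$ (e.g., constant scalar curvature), and then the maximum of $|R_{g'}|$ migrates to the annulus and need not sit on a single orbit. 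A compactly supported radial profile whose Laplacian is strictly maximized at the center does exist, but it must be engineered by prescribing the Laplacian subject to the two integral constraints that force compact support; it does not come out of your formula ``truncated by a suitable cutoff''.

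Second, even granting a good $\varphi$, your comparison ``$\sigma[R_g(q)-R_g(p)]\le 0$ plus strict first-order decrease implies $\sigma[R_{g'}(q)-R_{g'}(p)]<0$ for small $\epsilon$'' is pointwise in $q$ and cannot be made uniform as stated: both the zeroth-order slack and the first-order decrease vanish as $q\to Gp$, while the $O(\epsilon^2)$ remainder is only uniformly bounded, so no single $\epsilon$ emerges. Near the orbit one needs a genuine second-order (slice-Hessian) argument, and there two further issues appear that you dismiss as routine: the prefactor $e^{-2\epsilon\varphi}$ contributes $-2\epsilon\varphi\,\sigma R_g$, which has the \emph{unfavorable} sign when $\sigma=-1$ and can dominate the Hessian of $2(n-1)\epsilon\,\sigma\Delta_g\varphi$ unless the support scale of $\varphi$ is shrunk in terms of $M_0$ (fixing $\|\varphi\|_{C^2}$ and shrinking only $\epsilon$ does not help, since both terms are linear in $\epsilon$); and, because $\varphi$ is constant along orbits, $\Delta_g\varphi$ acquires a mean-curvature term pairing with $\mathrm{d}\varphi$ whose derivative at $p$ need not vanish, so the maximizing orbit can drift off $Gp$ at order $\epsilon$ --- harmless for the statement, but it means ``strict maximum exactly at $p$'' is not the right invariant to chase, and uniqueness of the maximizing orbit in the tube has to be argued differently. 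So either repair the one-shot argument along these lines (prescribed-Laplacian profile plus a Hessian analysis near $Gp$, with scales chosen relative to $M_0$), or fall back on the iteration, which is how the paper sidesteps all of these points. (Minor remark: the reduction to a principal orbit and the requirement $\dim V\ge 2$ are not needed for this lemma --- a profile of the form $\psi(d_g(\cdot,Gp))$ is automatically $G$-invariant --- and the dimension hypothesis enters only through Proposition~\ref{pr:ncrgen}.)
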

\begin{proof}
 By part ({\it \ref{item:7}}) of Proposition~\ref{pr:ncrgen} we can perturb $g$ (arbitrarily) slightly so as to ensure the scalar curvature
 \begin{gather*}
 R(p) = \operatorname{tr} \cat{ric}(p)
 \end{gather*}
 is non-zero for most $p$. We retain this assumption on $g$ throughout the rest of the proof.

 Now let $p\in M$ be a point where the maximal absolute value $|R(q)|$, $q\in M$ is achieved (it~will be the point $p$ in the statement), and fix a $G$-invariant neighborhood $U$ of $Gp$. 
 Consider a smooth function
 \begin{gather*}
 \psi\colon\ \mathbb{R}_{\ge 0}\to \mathbb{R}_{\ge 1}
 \end{gather*}
 that is
 \begin{itemize}\itemsep=0pt
 \item $C^{\infty}$-close to the constant function $1$,
 \item equal to some constant slightly larger than $1$ on a small interval $[0,r]$,
 \item equal to $1$ on $[r+\varepsilon,\infty)$.
 \end{itemize}
 One then obtains a smooth $G$-invariant function $\varphi$ on $M$, $C^{\infty}$-close to $1$, by
 \begin{gather*}
 \varphi(x):=\psi(\text{distance from $x$ to the orbit $Gp$}),\qquad \forall\, x\in M.
 \end{gather*}
 We assume $r$ in the above discussion is small enough that $\varphi$ is identically $1$ off $U$.

 Finally, consider the $G$-invariant conformal rescaling $g_1:=\varphi^{-2} g$. Because it scales $g$ by the constant $\psi(0)^{-2}<1$ in a neighborhood of $Gp$, it scales the operator $\cat{ric}(p)$ (and hence its trace) by the inverse scalar $\psi(0)^2>1$. Since $g_1\cong g$ off $U$, the new metric achieves its maximal
 \begin{gather}\label{eq:sprric}
 |R(q)|,\qquad q\in M
 \end{gather}
 somewhere in $U$.

 Now repeat the procedure, as in the proof of~\cite[Proposition~8.3]{eb}: pick $q\in U$ maximizing~(\ref{eq:sprric}) for $g_1$, choose a neighborhood $U_1$ of $q$ less than half the size of $U$ with respect to some fixed metric inducing the topology of $M$, and perturb $g_1$ to $g_2$ so that
 \begin{itemize}\itemsep=0pt
 \item the perturbation $g_2-g_1$ is less than half the size of $g_1-g$ in some metric inducing the $C^{\infty}$ topology on the space of Riemannian structures,
 \item $g_2=g_1$ off $U_1$, and
 \item for $g_2$ the maximal value of (\ref{eq:sprric}) is achieved in $U_2$.
 \end{itemize}
 Continuing in this fashion, the limit
 \begin{gather*}
 g':=\lim_{n\to\infty}g_n
 \end{gather*}
 will be a $G$-invariant metric close to $g$ whose maximal (\ref{eq:sprric}) is achieved on a {\it unique} orbit contained in the original (arbitrarily small) neighborhood $U$ of $p$. It follows that orbit must be preserved by the isometry group of $g'$, as desired.

As for the last statement (on $g\not\in \mathcal{SR}_G(M)$), it is clear from the proof: the argument produces metrics identical to $g$ off $U$ after the initial step of perturbing $g$ away from $\mathcal{SR}_G(M)$.
\end{proof}

\begin{proof}[Proof of Theorem~\ref{th:equirig}]
 By passing to the components of the action in the sense of Definition~\ref{def:princ}, we may as well assume that the orbit space $M/G$ is connected. Furthermore, by~Lemma~\ref{le:1invorb} we can assume that our metric $g$ achieves its maximal scalar curvature along a single orbit $Gp$ (for some $p\in M$).

 Now consider the geodesics emanating from $p$, orthogonal to $Gp$ (we refer to such geodesics as {\it horizontal}, in keeping with the spirit of Definition~\ref{def:vert}). Denoting by $d_g$ the distance induced by the metric $g$, for sufficiently small $r>0$ the tubular neighborhood
 \begin{gather*}
 Gp_{\le r}:=\{q\in M\,|\, d_g(q,Gp)\le r\}
 \end{gather*}
 is (by the principality of the action) diffeomorphic to $Gp\times H_{\le r}$, where
 \begin{itemize}\itemsep=0pt
 \item $H$ is the union of the horizontal geodesics emanating from $p$, and hence a manifold close enough to $Gp$,
 \item $H_{\le r}$ is, as the notation suggests, the subset of $H$ at distance $d_g\le r$ from the orbit $Gp$ (or~equivalently, from $p$).
 \end{itemize}
 Horizontal geodesics are orthogonal to all $G$-orbits they encounter (e.g.,~\cite[Lemma 9.44]{bes} or~\cite[Section~1.1]{haef-riem}), and we can obtain $G$-invariant Riemannian structures by deforming the metric~$g$ along the manifold~$H$ comprising the horizontal geodesics (sufficiently close to $G_p$ so as not to run into injectivity-radius issues) and keeping it invariant along the $G$-orbits. Explicitly, at a~point $q\in H_r$ we can split the tangent space $T_qM$ as
 \begin{gather*}
 T_qM = T_q(Gq)\oplus T_qH,
 \end{gather*}
 decompose the matrix of the Riemannian metric $g$ correspondingly as a block matrix
 \begin{gather*}
 \begin{pmatrix}
 A_v&B\\
 B^t&A_h
 \end{pmatrix}
 \end{gather*}
 (with the top left and bottom right corners representing, respectively, the restrictions of $g$ to $Gq$ and $H$), and deforming only the lower right-hand corner $A_h$ sufficiently slightly so as to ensure the resulting matrix still represents a {\it positive} symmetric bilinear form.

 The isometry group $\cat{aut}(g)$ leaves $Gp$ invariant, and hence the isotropy subgroup $\cat{aut}(g)_p$ preserves every $p$-centered ball $H_{\le r}$ in $H$. Now choose small $r,\varepsilon>0$ and deform the metric slightly in $H_{\le 2r}$ so that
 \begin{itemize}\itemsep=0pt
 \item the perturbed metric coincides with the old metric $g$ outside $H_{\le r+\varepsilon}$ and inside $H_{\le r-\varepsilon}$,
 \item inside the annulus $H_{[r-\varepsilon,r+\varepsilon]}$ the perturbation is {\it spherical}, in the sense that we choose {\it geodesic spherical coordinates} \cite[Section~III.1]{chav} in $H_{\le r}$ centered at $p$, with a radial coor\-dinate and $(\dim H-1)$ ``angular'' coordinates, and deform the metric only along the latter,
 \item the perturbed metric on the sphere $H_r$ has trivial isometry group (this is possible because that sphere is at least $2$-dimensional by (\ref{eq:dimbd}), and hence~\cite[Proposition~8.3]{eb} applies).
 \end{itemize}
 For the resulting $G$-invariant metric $g'$ the manifold $H$ consisting of horizontal geodesics emanating from $p$ still bears that description because of the spherical character of the deformation. By construction, the isotropy group $\cat{aut}(g')_p$ will then fix $H_{r}$ identically (i.e.,~pointwise). But~in that case
\begin{itemize}\itemsep=0pt
\item $\cat{aut}(g')_p$ leaves invariant the $G$-orbit of every point in the tubular neighborhood $GH_{r}$ of~$Gp$,
\item and hence so does
 \begin{gather*}
 \cat{aut}(g') = G \cdot \cat{aut}(g')_p.
 \end{gather*}
\end{itemize}
Note that the latter product is not direct, and `$G$' is a stand-in for its image in the automorphism group of $g'$ (the action of $G$ is not assumed faithful here).

Since we are assuming the orbit space $M/G$ is connected, all orbits are reachable from $Gp$ by~horizontal geodesics emanating from it. Since $\cat{aut}(g')_p$ acts trivially on the initial segments of~those geodesics it acts trivially on horizontal geodesics period, meaning that {\it all} orbits are left invariant by $\cat{aut}(g')$.
\end{proof}

\subsection{Some remarks on the literature}

The discussion above gives a brief review of the proof of~\cite[Proposition~8.3]{eb}. For this reader, at least, that proof presented a difficulty that appeared not to be immediately addressed by the text in loc.cit. Specifically, the proof proceeds, as indicated above, by
\begin{enumerate}[(1)]\itemsep=0pt
\item\label{item:18} first deforming a metric $g$ so as to produce a {\it globally-invariant} point $p$ (i.e.,~one fixed by all isometries), and then
\item\label{item:19} deforming the metric again around a radius-$r$ sphere $S_{p,r}$ centered at $p$ so as to produce a~point $q$ where the Ricci $(1,1)$-tensor $\cat{ric}$ achieves its unique maximal spectral value along~$S_r(p)$.
\end{enumerate}

The isometry group of the metric obtained after step (\ref{item:18}) will leave $p$ invariant, and hence also $S:=S_r(p)$ (which in~\cite{eb} would be denoted by $A_p^r$). If $\cat{ric}$ were to achieve its maximal spectral value at a unique point $q\in S$ at this stage, then $q$ would be invariant under the isometry group. The problem, though, is that $q$ is produced after further deformation, whereupon $S$ need not remain a $p$-centered sphere.

In other words, I see no reason (without further elaboration) why the metric produced after~(\ref{item:19}) should leave $S$ invariant (and hence $q$ on it). There are ways to handle this:
\medskip

{\bf Deforming outside a ball.} The alteration of the metric ``around $S$'' (as it is phrased on~\cite[p.~36]{eb}, with $A_q^{\rho}$ in place of $S$) might be interpreted as an alteration only outside the ball $B_r(p)$ bounded by $S$. This is possible, since the alteration in question consists of adding to the $(2,0)$-tensor $g$ another tensor whose $2^{nd}$ derivatives with respect to a system of normal coordinates satisfy certain inequalities (see~\cite[equation (8.4)]{eb}).

This would ensure that after the deformation in (\ref{item:19}) the radius $r$-sphere centered at $p$ retains its identity.

\medskip

{\bf An inductive approach.} Alternatively, one could proceed inductively on dimension, by
\begin{itemize}\itemsep=0pt
\item first proving the claim separately for surfaces, and then
\item finding $p$ as above, and then modifying the metric only on geodesic spheres around $p$ as in the proof of Theorem~\ref{th:equirig}, making use of spherical coordinates.
\end{itemize}

\section{Maximal rigidity}\label{se:maxrig}

As indicated in the Introduction, the initial motivation for the results above was to produce $G$-invariant metrics whose isometry group is {\it precisely} $G$; they should, in other words, be maximally rigid subject to the requirement that they be $G$-invariant (hence the title of the present section). This also justifies

\begin{Notation}
 Given a faithful isometric action of a Lie group $G$ on a Riemannian manifold~$M$, the space $\mathcal{M}_G^{\max}(M)$ of {\it maximally rigid} $G$-invariant metrics consists of those $g\in \mathcal{M}_G(M)$ whose isometry group is precisely $G$.

 The same notation (and terminology) applies to arbitrary (non-faithful) actions: if $H\trianglelefteq G$ is the kernel of the action, then by definition
 \begin{gather*}
 \mathcal{M}_G^{\max}(M) = \mathcal{M}_{G/H}^{\max}(M).
 \end{gather*}
\end{Notation}
Since we can harmlessly pass to faithful actions by passing to the quotient by the kernel of the action, we typically assume faithfulness throughout.

One cannot hope for metrics produced as in Theorem~\ref{th:equirig} to be maximally rigid in full generality, for arbitrary compact Lie groups. Indeed, most {\it finite} groups $G$ will fail in that respect:

\begin{Example}\label{ex:disc}
 Let $G$ be a compact Lie group with $\ge 3$ connected components $G_i$, acting in the obvious fashion on $M:=G\times N$ for some manifold $N$. Then, for any $G$-invariant Riemannian structure $g$ on $M$, the automorphism group $\cat{aut}(g)$ can permute the manifolds $G_i\times N$ for $\gamma\in N$ arbitrarily.

Now, if $G_0\subset G$ is the identity component, then the action of $G$ on the set of manifolds $G_i\times N$ is isomorphic (as a permutation action) to the regular action of $G/G_0$. Since the latter is strictly smaller than the symmetric group $S(G/G_0)$ of the {\it set} $G/G_0$, we have
 \begin{gather*}
 S(G/G_0)\subset \cat{aut}(g)\quad \text{but}\quad S(G/G_0)\not\subseteq G\subset \cat{aut}(g).
 \end{gather*}
 In particular, for such $G$ (and actions) we can never obtain $G=\cat{aut}(g)$ for a suitable Riemannian metric $g$.
\end{Example}

It turns out, though, that the disconnectedness of $G$ in Example~\ref{ex:disc} is the {\it only} issue:

\begin{Theorem}\label{th:precaut}
 Let $G$ be a compact connected Lie group acting freely and smoothly on a compact smooth manifold $M$. Then, the following statements hold.
 \begin{itemize}\itemsep=0pt
 \item The subset
 \begin{gather}\label{eq:maxrigmetrics}
 \mathcal{M}^{\max}_G(M) \subseteq \mathcal{M}_G(M)
 \end{gather}
 is open.
 \item If furthermore the components $M_i$ of $M$ satisfy the dimension inequality
 \begin{gather}\label{eq:2g}
 \dim M_i\ge \max(3+\dim G,2\dim G+1)
 \end{gather}
 then $(\ref{eq:maxrigmetrics})$ is dense.
 \end{itemize}
\end{Theorem}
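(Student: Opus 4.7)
The statement that $\mathcal{M}^{\max}_G(M)$ is open amounts to ruling out the isometry group growing under small perturbations of $g\in\mathcal{M}^{\max}_G$. I would separate this into (a)~upper semicontinuity of $\dim\cat{aut}(g)$ and (b)~non-appearance of extra discrete components. For (a), Killing fields are determined by their 1-jets at a point (cf.~\cite[Chapter~VI]{kn1}), so a sequence of Killing fields of $g_n\to g$, normalized to have unit 1-jet at a fixed base point, admits a subsequential limit that is Killing for $g$; this rules out $\dim\cat{aut}(g_n)\ge\dim G+1$ for large $n$, and combined with $G\subseteq\cat{aut}(g_n)^0$ and the connectedness of $G$ gives $\cat{aut}(g_n)^0=G$ eventually. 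For (b), supposing $\gamma_n\in\cat{aut}(g_n)\setminus G$, the Arzela--Ascoli argument of part~(\ref{item:23}) in the proof of Theorem~\ref{th:equirig} yields a subsequential limit $\gamma\in\cat{aut}(g)=G$; then $\gamma^{-1}\gamma_n\in\cat{aut}(g_n)$ (since $\gamma^{-1}\in G\subseteq\cat{aut}(g_n)$) tends to the identity, so for large $n$ it lies in $\cat{aut}(g_n)^0=G$, forcing $\gamma_n\in G$ --- a contradiction.

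\textbf{Density.} Theorem~\ref{th:equirig} supplies a dense $G_{\delta}$ of vertical metrics in $\mathcal{M}_G(M)$ already under $\dim M_i\ge 3+\dim G$, and since every maximally rigid metric is vertical, it suffices to approximate any vertical $g$ by some $g'\in\mathcal{M}^{\max}_G$. Fix a principal orbit $Gp$; for vertical $g$ the restriction
\[
\rho\colon\cat{aut}(g)\longrightarrow\cat{aut}(Gp,g|_{Gp})
\]
is a group homomorphism whose image automatically contains $\rho(G)$, the copy of $G$ acting on $Gp\cong G$ by left translations. When $\ker\rho=\{e\}$ and $\rho(\cat{aut}(g))=\rho(G)$, the inclusion $G\subseteq\cat{aut}(g)$ together with injectivity of $\rho$ forces $\cat{aut}(g)=G$. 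I would prove both conditions hold on open dense subsets of $\mathcal{M}_G(M)$. Triviality of $\ker\rho$ is a direct transcription of Ebin's argument \cite[Proposition~8.3]{eb} to the geodesic normal sphere of $Gp$, which has dimension $\dim M-\dim G-1\ge 2$ precisely by $\dim M_i\ge 3+\dim G$: the same Ebin-style perturbations confined to a $G$-invariant tube around $Gp$ rigidify the normal geometry enough to eliminate any nontrivial orthogonal transformation of $N_p(Gp)$ extending to an isometry of $g$ fixing $Gp$ pointwise. The condition $\rho(\cat{aut}(g))=\rho(G)$ requires that no non-$G$ fibre symmetry of $(Gp,g|_{Gp})$ lift to a global isometry of $g$, and it is here that the bound $\dim M_i\ge 2\dim G+1$ enters: $\dim N_p(Gp)=\dim M-\dim G\ge\dim G+1$ gives enough ``horizontal budget'' to perform independent generic perturbations that destroy the compatibility conditions needed for any non-$G$ element of $\cat{aut}(Gp,g|_{Gp})$ to extend.

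\textbf{Main obstacle.} The delicate point is the second density condition. Kernel triviality is handled cleanly by Ebin once the normal sphere is at least $2$-dimensional, but ruling out lifts of non-$G$ fibre symmetries requires a finer accounting: such symmetries arise not only from extra isometries of a generic left-invariant metric on $G$ (essentially absent when $G$ is non-abelian) but also, in the torus case, from unavoidable reflections and discrete automorphisms that must each be destroyed by distinct transverse perturbations. The $2\dim G+1$ hypothesis is meant precisely to ensure enough independent such perturbation directions. Finally, the whole scheme must be combined with Theorem~\ref{th:equirig}'s perturbations via Baire category --- rather than executed in a single open-dense sweep --- because the vertical subset $\mathcal{M}^v_G(M)\subset\mathcal{M}_G(M)$ is only $G_{\delta}$.
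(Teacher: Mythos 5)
Your proposal has genuine gaps in both halves. For openness, the decisive step ``$\gamma^{-1}\gamma_n\in\cat{aut}(g_n)$ tends to the identity, so for large $n$ it lies in $\cat{aut}(g_n)^0$'' is unjustified: for each fixed $n$ there is indeed a threshold $\epsilon(g_n)>0$ below which small-displacement isometries of $g_n$ lie in the identity component, but you need this threshold to stay bounded away from $0$ as $g_n\to g$, and that uniformity is essentially the stability statement being proved. Your own Arzela--Ascoli tool cannot close this: the hypothetical bad elements $\gamma^{-1}\gamma_n$ could be of large finite order and converge to the identity, and their limit lying in $G$ is no contradiction. The paper supplies exactly this missing uniform control by citing Ebin's slice theorem \cite[Theorem~8.1]{eb}, which gives $\sigma\,\cat{aut}(g')\,\sigma^{-1}\subseteq\cat{aut}(g)=G$ for $g'$ near $g$, after which a size comparison (a Lie group contains no proper isomorphic copy of itself; then equal dimension and component count) yields $\cat{aut}(g')=G$. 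A smaller issue: in your Killing-field step the fields must be normalized not just to have unit $1$-jet but to be transverse (at the level of $1$-jets) to the fundamental vector fields of the $G$-action, or the limit may simply lie in the Lie algebra of $G$; this is fixable, but as written the dimension bound does not follow.

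For density, your reduction (vertical $g$, restriction $\rho$ to a preserved orbit, then $\ker\rho=\{e\}$ plus $\rho(\cat{aut}(g))=\rho(G)$) is sound in principle, and killing $\ker\rho$ by Ebin-type rigidification of a sphere in the horizontal slice is essentially what already happens in the proof of Theorem~\ref{th:equirig}. But the second condition --- excluding global isometries that restrict to extra symmetries of $(Gp,g|_{Gp})$ --- is precisely the hard part, and you give only a heuristic (``enough horizontal budget'', ``independent generic perturbations'') with no construction and no actual use of the bound (\ref{eq:2g}). The paper's mechanism is concrete and different: it never analyzes the symmetries of the orbit; having arranged that $\cat{aut}(g)_p$ acts trivially on the horizontal slice $H_{g,p}$, it perturbs only the horizontal distribution of the submersion $M\to M/G$ (tilting a slice $S$ near radius $r$, leaving the base and fibre metrics untouched) so that $T_q(Gq)^{\perp}$ and $T_q(H_{g,p})$ sit in general position at some $q$; then
\begin{gather*}
\dim T_q(Gq)^{\perp}+\dim T_q(H_{g,p})-1=2(\dim M-\dim G)-1\ge\dim M
\end{gather*}
by (\ref{eq:2g}), so the two subspaces span $T_qM$, the isotropy group $\cat{aut}(g')_p$ fixes $q$ and acts trivially on $T_qM$, hence is trivial, and Lemma~\ref{le:charmax} concludes. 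Without an argument of this kind your density proof is incomplete. Finally, the Baire-category packaging is both unavailable as set up (your two conditions are not exhibited as open or $G_{\delta}$ subsets of $\mathcal{M}_G(M)$) and unnecessary: once openness is known, a single chain of localized perturbations of an arbitrary metric, as in the paper, suffices.
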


As an immediate consequence we have

\begin{Corollary}\label{cor:isiso}
 Every compact connected Lie group arises as the isometry group of some compact Riemannian manifold.
\end{Corollary}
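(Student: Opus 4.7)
The plan is to deduce Corollary~\ref{cor:isiso} as a straightforward application of Theorem~\ref{th:precaut}: it suffices to exhibit, for each compact connected Lie group $G$, a single compact smooth manifold $M$ carrying a free smooth $G$-action whose components satisfy the dimension bound (\ref{eq:2g}), and then invoke Theorem~\ref{th:precaut} to produce a $G$-invariant metric on $M$ with isometry group exactly $G$.

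The simplest choice is $M:=G\times N$, where $G$ acts by left translation on itself and trivially on $N$, and $N$ is an auxiliary compact connected smooth manifold chosen to inflate the dimension. The action is free and smooth because left translation of $G$ on itself is, and $M$ is compact and connected so long as $N$ is. All that remains is to pick $N$ so that
\begin{gather*}
 \dim M=\dim G+\dim N\ge \max(3+\dim G,\,2\dim G+1),
\end{gather*}
i.e.,~$\dim N\ge \max(3,\dim G+1)$; a sphere $N:=\mathbb{S}^{\dim G+3}$ works uniformly, though any compact connected manifold of sufficient dimension will do.

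With this $M$ in hand, one needs $\mathcal{M}_G(M)$ to be non-empty in order for Theorem~\ref{th:precaut} to yield anything; this is immediate from the classical averaging construction (take any Riemannian metric on $M$ and average over $G$ against Haar measure). Theorem~\ref{th:precaut} then guarantees that $\mathcal{M}_G^{\max}(M)$ is (open) dense in $\mathcal{M}_G(M)$, hence non-empty, and any metric $g\in \mathcal{M}_G^{\max}(M)$ gives a compact Riemannian manifold $(M,g)$ whose isometry group is precisely $G$, as desired.

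There is essentially no obstacle beyond assembling the ingredients: the dimension bound in Theorem~\ref{th:precaut} is linear in $\dim G$ and can always be met by padding with a factor of sufficient dimension, freeness is built into the construction, and the role of the theorem is precisely to strip away any extraneous isometries (e.g.,~those of the $N$-factor or of the $G$-factor viewed as a bi-invariantly metrized Lie group) that $M$ might a priori admit.
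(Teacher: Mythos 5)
Your proof is correct and is essentially the paper's own argument: both take $M=G\times N$ with the obvious free action on the first factor and a compact connected $N$ of large enough dimension, then apply Theorem~\ref{th:precaut}. Your added details (the explicit bound $\dim N\ge\max(3,\dim G+1)$ and the averaging argument showing $\mathcal{M}_G(M)\ne\varnothing$) are accurate but routine.
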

\begin{proof}
 In Theorem~\ref{th:precaut}, simply take $M=G\times N$ equipped with the obvious action on the left-hand factor for some connected manifold $N$ of sufficiently large dimension.
\end{proof}

This answers the question in~\cite[Section~4]{mell} (and~\cite[Question~Q4]{niem}) affirmatively.

\begin{Remark}\label{re:conn}
Since $G$ is connected, it operates on each connected component of $M$. Restricting our attention to an individual component, we can assume that $M$ is connected; we do this throughout the present section. With this connectedness assumption in place, an isometry of~$M$ is trivial if and only if
\begin{itemize}\itemsep=0pt
\item it fixes some point $p$ (arbitrary, chosen beforehand), and
\item it induces the trivial linear action on $T_pM$.
\end{itemize}
\end{Remark}

\begin{proof}[Proof of Theorem~\ref{th:precaut}: openness]
 This follows from the upper semicontinuity of the automorphism group of Riemannian structures. Let $g\in \mathcal{M}_G^{\max}(M)$. According to the aforementioned semicontinuity result \cite[Theorem~8.1]{eb}, for $g'\in \mathcal{M}_G(M)$ sufficiently close to $g$ we have
 \begin{gather}\label{eq:conjin}
 \sigma \cat{aut}(g') \sigma^{-1}\subseteq \cat{aut}(g) = G
 \end{gather}
 for some diffeomorphism $\sigma$ of $M$. The left hand side is a subgroup of $\cat{diff}(M)$ (group of~dif\-feomorphisms) containing the Lie group
 \begin{gather*}
 \sigma G\sigma^{-1} \subset \cat{diff}(M)
 \end{gather*}
 because, $g'$ being $G$-invariant, $\cat{aut}(g')$ contains $G$. Since Lie groups cannot contain proper isomorphic copies of themselves (\ref{eq:conjin}) must be an equality. It follows that so too is
 \begin{gather*}
 G\subseteq \cat{aut}(g'),
 \end{gather*}
 again for reasons of size: $G$ and $\cat{aut}(g')$ are Lie groups with the same dimension and the same number of components, one containing the other.
\end{proof}

We have the following characterization of maximally rigid actions.

\begin{Lemma}\label{le:charmax}
 A vertical free action of a compact Lie group $G$ on a connected manifold $M$ is maximally rigid if and only if either of the following equivalent statements holds:
 \begin{enumerate}[$(a)$]\itemsep=0pt
 \item
 the action of the isometry group is free, i.e.,~the isotropy group of every point is trivial,
 \item\label{item:9}
 the isotropy group of a {\it single} arbitrary point $p\in M$ is trivial.
 \end{enumerate}
\end{Lemma}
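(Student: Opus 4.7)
The plan is to use verticality to show that for any vertical free $G$-action, the isometry group factors as $\cat{aut}(g) = G\cdot \cat{aut}(g)_p$ for every point $p\in M$, and then to combine this with freeness to reduce everything to the isotropy at a single point. The real content is a very short orbit-chasing argument; connectedness of $M$ is only used in combination with freeness to make the identification $Gp\cong G$ clean, and to keep us consistent with Remark~\ref{re:conn}.

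Concretely, I first fix $p\in M$ and observe that verticality means every $\gamma\in \cat{aut}(g)$ preserves the orbit $Gp$ setwise. Since $G$ acts transitively on $Gp$ and freely (so the stabilizer $G_p$ is trivial), there is a unique $h\in G$ with $hp=\gamma(p)$. Then $h^{-1}\gamma\in \cat{aut}(g)_p$, so
\begin{gather*}
 \cat{aut}(g) = G\cdot \cat{aut}(g)_p,
\end{gather*}
and moreover $G\cap \cat{aut}(g)_p = G_p = \{e\}$ because $G$ acts freely. This decomposition is the key structural fact behind both equivalences.

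Now the main claim: maximal rigidity $\cat{aut}(g)=G$ is equivalent to $\cat{aut}(g)_p=\{e\}$, because the decomposition together with the triviality of the intersection implies $\cat{aut}(g)/G \cong \cat{aut}(g)_p$ as sets. This is exactly the direction (b)$\Rightarrow$maximal rigidity, while the reverse is immediate from the freeness of the $G$-action (which forces $\cat{aut}(g)_p = G_p = \{e\}$ whenever $\cat{aut}(g)=G$). Finally, (a) trivially implies (b), and conversely (b) implies $\cat{aut}(g)=G$, whose action is free on $M$ by hypothesis, giving (a).

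I do not expect any real obstacle here: the argument is essentially a diagram chase among orbits once verticality is invoked, and the only subtlety is being careful that $G\cdot \cat{aut}(g)_p$ is the honest set product (not a direct product) inside $\cat{aut}(g)$, which is fine because we are only making a set-theoretic factorization statement and not a group-theoretic splitting.
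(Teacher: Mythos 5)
Your argument is correct and is essentially the paper's own proof: verticality lets you push any isometry's image of $p$ back along the orbit by a unique group element (freeness), yielding the factorization $\cat{aut}(g)=G\cdot\cat{aut}(g)_p$ with $G\cap\cat{aut}(g)_p=G_p=\{e\}$, which is exactly the orbit-chasing chain of equivalences the paper uses. The only difference is cosmetic (you also spell out the easy equivalence with statement (a), which the paper leaves to the reader), so there is nothing to fix.
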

\begin{proof}
 We only prove equivalence to ({\it \ref{item:9}}), leaving the other point to the reader.

 For a vertical metric $g\in \mathcal{M}_G^v(M)$ an arbitrary point $p\in M$ will be moved by every isometry~$\sigma$ to a point $q$ on the same orbit $Gp$. We can then translate $q$ back to $p$ via the $G$-action, i.e.,~by some element $\gamma\in G$. Then, $\sigma$ belongs to $G$ if and only if $\gamma\sigma$ does. Since the action is free, the isotropy group $G_p$ is trivial. We already know that $\gamma\sigma$ is in the isotropy group $\cat{aut}(g)_p$, so
 \begin{gather*}
 \sigma\in G\iff \sigma\gamma\in G\iff \sigma\gamma\in G_p\iff \sigma\gamma=1.
 \end{gather*}
 Since, as $\sigma$ ranges over $\cat{aut}(g)$, elements of the form $\sigma\gamma$ range over $\cat{aut}(g)_p$, this proves the equivalence between maximal rigidity and ({\it \ref{item:9}}).
\end{proof}

We will often keep this characterization in mind in the arguments below, sometimes implicitly.

Note that even though Theorem~\ref{th:equirig} only says that the vertical metrics form a $G_{\delta}$ (rather than open) set, in the context of that proof we have quite a bit of freedom in varying $g$ so as to keep it vertical. Specifically, if, as in that proof, we assume the maximal scalar curvature is achieved along a unique orbit $Gp$ (as we will), then all metrics $g'$
\begin{itemize}\itemsep=0pt
\item sufficiently $C^{\infty}$-close to $g$,
\item coinciding with $g$ close to $Gp$
\end{itemize}
will be vertical. This is because, again as in the aforementioned proof, the corresponding ``horizontal'' manifold $H$ through $p$ (i.e.,~the union of the geodesics emanating from $p$ and orthogonal to $Gp$) will have trivial isometry group by~\cite[Proposition~8.3]{eb}. For these reasons, we need not worry below, in the proof of Theorem~\ref{th:precaut}, about breaking the verticality of our slightly-deformed Riemannian structures.

\begin{proof}[Proof of Theorem~\ref{th:precaut}: density]
 According to Theorem~\ref{th:equirig} we can deform an arbitrary metric arbitrarily slightly so as to render it vertical, so we work with vertical metrics $g$ to begin with. In fact, we will assume (via Lemma~\ref{le:1invorb}) that the maximal scalar curvature of $g$ is achieved along a unique orbit~$Gp$, and hence that orbit is left invariant.

 We also reprise some of the notation (and setup) from the proof of Theorem~\ref{th:equirig}: $H$ will be a~manifold consisting of sufficiently short geodesic arcs based at $p$ and orthogonal to the orbit~$Gp$, we work inside small balls $H_{\le r}$ therein, etc. When we want to indicate the dependence of~$H$ on~$g$ and/or $p$ we decorate $H$ with those subscripts, as in $H_g$, $H_p$ or, maximally, $H_{g,p}$.

 From Remark~\ref{re:conn} and Lemma~\ref{le:charmax} we know that it suffices to find metrics $g'$, close to $g$, for~which the isotropy group of some (or any) $q\in M$ acts trivially on the tangent space $T_qM$. The~isotropy group $\cat{aut}(g)_p$ of $p$ acts trivially on
 \begin{itemize}\itemsep=0pt
 \item the horizontal manifold $H_{g,p}$ at $p$ and hence on every $T_q(H_{g,p})$ for $q$ thereon,
 \item on the horizontal manifold $H_{g,q}$ at $q\in H_{g,p}$, if $q$ is sufficiently close to $p$, because in that case the restricted metric on $H_{g,q}$ will be close to that on its diffeomorphic counterpart~$H_{g,p}$, and hence will be rigid by~\cite[Corollary~8.2 and Proposition~8.3]{eb}.\footnote{The proof of~\cite[Proposition~8.3]{eb}, asserting density, does not require that the manifold be boundary-less. On the other hand, while the openness result~\cite[Corollary~8.2]{eb} is nominally proved for boundary-less manifolds (though see~\cite[p.~11, footnote~3]{eb}), one can simply regard $H_{g,p}$ as a subset of such a manifold: Riemannian structures always extend from compact manifolds with boundary to compact manifolds without boundary, e.g., by~\cite[Theorem~A]{pv}.}
 \end{itemize}

 \begin{Claim}
 $g$ can always be deformed slightly so as to ensure that for $q\in H_{g,p}$ close to $p$ the subspaces
 \begin{gather}\label{eq:2sp}
 T_q(Gq)^{\perp}\qquad\text{and}\qquad T_q(H_{g,p})\subset T_qM
 \end{gather}
 are in general position, i.e., intersect minimally.
 \end{Claim}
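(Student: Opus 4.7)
Because $G$ acts freely and properly on $M$ with $G$ compact, $\pi\colon M\to B:=M/G$ is a smooth principal $G$-bundle, and a $G$-invariant metric $g$ on $M$ decomposes canonically into three independent pieces of data: a Riemannian metric $g_B$ on $B$, a smoothly varying family of ${\rm Ad}$-invariant inner products on $\mathfrak{g}$ (the fiber metric), and a principal connection 1-form $\omega$ whose horizontal distribution is precisely $\mathcal{H}_q:=T_q(Gq)^{\perp}$. The horizontal manifold $H_{g,p}$ is the image of the local section $s$ of $\pi$ obtained by exponentiating along $V:=T_p(Gp)^{\perp}$; thus both $T_qH_{g,p}=s_*(T_{\pi(q)}B)$ and $\mathcal{H}_q$ project isomorphically onto $T_{\pi(q)}B$ via $\pi_*$.

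\textbf{Intersection as kernel of a connection pullback.} The first step is the identification
\begin{gather*}
T_qH_{g,p}\cap\mathcal{H}_q \;=\; s_*\bigl(\ker(s^*\omega)_{\pi(q)}\bigr),
\end{gather*}
so ``general position'' (intersection of the minimal dimension $\dim M_i-2\dim G$) is equivalent to surjectivity of $(s^*\omega)_{\pi(q)}\colon T_{\pi(q)}B\to\mathfrak{g}$. Since $H_{g,p}$ is tangent to $\mathcal{H}$ at $p$ we have $(s^*\omega)(\pi(p))=0$, and since horizontal geodesics stay horizontal (\cite[Lemma~9.44]{bes}) the 1-form $s^*\omega$ vanishes identically in the radial direction, i.e., $s$ sits in Poincar\'e (exponential) gauge. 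A standard Taylor-expansion argument in that gauge yields $(s^*\omega)_{\pi(q)}(Y)=\tfrac12\,\Omega_{\pi(p)}(v,Y)+O(|v|^2)$, where $q=\exp_p(\tilde v)$ for $v\in V$ and $\Omega\in\Lambda^2V^*\otimes\mathfrak{g}$ is the curvature 2-form of the connection at $\pi(p)$.

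\textbf{Reduction to curvature genericity and realization by perturbation.} The first-order criterion for general position at $q=\exp_p(\tilde v)$ thus becomes surjectivity of the contraction $\iota_v\Omega_{\pi(p)}\colon V\to\mathfrak{g}$. The hypothesis $\dim M_i\ge 2\dim G+1$ says $\dim V\ge\dim\mathfrak{g}+1$, so for generic $\Omega\in\Lambda^2V^*\otimes\mathfrak{g}$ the set $\{v\in V:\iota_v\Omega\text{ surjective}\}$ is non-empty Zariski-open. To realize such a generic $\Omega_{\pi(p)}$ by a $C^{\infty}$-small perturbation of $g$, we invoke the three-fold decomposition above: leave $g_B$ and the fiber metric untouched and replace $\omega$ by $\omega+\alpha$ for a small, compactly-supported, $G$-equivariant $\mathfrak{g}$-valued 1-form $\alpha$ with $\alpha(\pi(p))=0$ and $d\alpha_{\pi(p)}$ an arbitrarily prescribed element of $\Lambda^2V^*\otimes\mathfrak{g}$. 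The resulting $g'\in\mathcal{M}_G(M)$ is $C^{\infty}$-close to $g$, has curvature at $\pi(p)$ equal to $\Omega_{\pi(p)}+d\alpha_{\pi(p)}$, and hence for $q=\exp_p(\tilde v)$ with $v$ ranging over an open cone of sufficiently small norm in $V$, the subspaces $T_qH_{g',p}$ and $T_q(Gq)^{\perp}$ are in general position.

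\textbf{Main obstacle.} The crux is the second-paragraph gauge-theoretic calculation: one must verify the radial-gauge vanishing of $s^*\omega$ (so that no symmetric connection-potential contamination survives at first order) and the coincidence of $T_qH_{g,p}\cap\mathcal{H}_q$ with $s_*\ker(s^*\omega)_{\pi(q)}$, so that the first-order dimension jump is governed cleanly by $\iota_v\Omega_{\pi(p)}$. Once this principal-bundle dictionary is in hand, both the linear-algebraic genericity of $\Omega\in\Lambda^2V^*\otimes\mathfrak{g}$ and the freedom to perturb only the connection within $\mathcal{M}_G(M)$ are routine.
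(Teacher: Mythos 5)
Your proposal is correct in substance and starts from the same decomposition the paper uses --- fix the quotient metric and the fibre metrics, and perturb only the horizontal distribution --- but the way you realize and exploit the perturbation is genuinely different. The paper proceeds synthetically: it replaces the horizontal ball $H_{g,p,\le 2r}$ by a nearby slice $S$, declares the tangent spaces of $S$ (and its $G$-translates) horizontal for the new metric, and asserts that this freedom lets $T_q(H_{g,p,r})$ sweep out an open subset of the Grassmannian while $T_q(Gq)^{\perp}$ stays put. You instead perturb the principal connection $\omega$ itself, identify the intersection $T_qH_{g,p}\cap T_q(Gq)^{\perp}$ with $s_*\ker(s^*\omega)_{\pi(q)}$ for the section $s$ traced out by the horizontal geodesics, observe that $s$ is automatically in radial gauge, and so convert general position into surjectivity of $(s^*\omega)_{\pi(q)}$, governed at first order by $\iota_v\Omega_{\pi(p)}$; linear-algebraic genericity of $\Omega$ together with $\dim V\ge\dim\mathfrak{g}+1$ then finishes the argument. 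This buys two things the paper's sketch leaves implicit: the dimension hypothesis $\dim M_i\ge 2\dim G+1$ enters exactly where it is needed (existence of $v$ with $\iota_v\Omega$ surjective), and, because the criterion is surjectivity of $(s'^*\omega')_{\pi(q)}$ computed entirely from the perturbed connection, you never have to argue separately that the deformation moves one of the two subspaces while holding the other fixed --- a delicate point that the slice construction in the paper treats rather informally, since redefining the horizontal distribution along $S$ a priori changes both $T_q(Gq)^{\perp}$ and the horizontal manifold through $p$ simultaneously. Two small points to tidy: the fibre datum of a $G$-invariant metric is a smoothly varying family of inner products on $\mathfrak{g}$ subject only to the equivariance forced by the action (a metric on the adjoint bundle), not Ad-invariant inner products --- harmless, since you never use Ad-invariance and you perturb only the connection; and in the final step you should note that the $O(|v|^2)$ error in the radial-gauge expansion is uniform over directions, so surjectivity indeed persists on an open cone of small vectors $v$, as you state.
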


 {\bf Wrapping up assuming the claim.} Since
 \begin{itemize}\itemsep=0pt
 \item they always intersect at least along the line in $T_q(H_{g,p})$ tangent to the geodesic connecting~$p$ and $q$, and
 \item we have
 \begin{gather*}
 \dim T_q(Gq)^{\perp} + \dim T_q(H_{g,p}) -1 = 2(\dim M-\dim G)-1\ge \dim M
 \end{gather*}
 \end{itemize}
 (by (\ref{eq:2g})), general position means that
 \begin{gather*}
 T_q(Gq)^{\perp} + T_q(H_{g,p}) = T_qM.
 \end{gather*}
 In conclusion, upon performing a small deformation of $g$ the group $\cat{aut}(g)_p$ fixes $q$ and acts trivially on $T_qM$, and is thus trivial. The conclusion follows, finishing the proof of the theorem.

 {\bf Proof of the claim.} This asserted our ability to deform $g$ so as to have (\ref{eq:2sp}) placed in general position. To see this, note first that for any $g'\in \mathcal{M}_G(M)$ the map $\pi\colon M\to M/G$ is a Riemannian submersion in the sense of~\cite[Definition~9.8]{bes} and conversely (e.g., by~\cite[Section~9.15]{bes}), in order to specify a $G$-invariant metric on $M$ we need to fix
 \begin{itemize}\itemsep=0pt
 \item a Riemannian structure on $M/G$,
 \item smoothly-varying $G$-invariant Riemannian structures on the fibers (isomorphic to $G$) of $M\to M/G$,
 \item a $G$-invariant {\it distribution} $\mathcal{H}\subset TM$ (i.e.,~a smoothly-varying choice of subspaces $\mathcal{H}_x\subset T_xM$ for $x\in M$) complementary to the {\it vertical} distribution $\mathcal{V}$ consisting of vectors tangent to fibers.
 \end{itemize}
 ($\mathcal{H}_g$ will then consist of the tangent vectors orthogonal to the fibers.) Correspondingly, our desired modification of $g$ will
 \begin{itemize}\itemsep=0pt
 \item leave the already-existing Riemannian structure on $M/G$ unaffected,
 \item leave the already-existing metrics on the fibers unaffected,
 \item alter only the horizontal distribution $\mathcal{H}_g$ attached to $g$ slightly, to $\mathcal{H}_{g'}$.
 \end{itemize}

 Recall that $H$ consists of geodesics emitted from $p$ and orthogonal to $Gp$, and we chose $q\in H$ some small distance $r$ away from $p$. The tangent space $T_q(H_{g,p})$ is spanned by the line tangent to the geodesic $pq$ and the tangent space $T_q(H_{g,p,r})$ where, consistently with the notation $H_{\le r}$ above,
 \begin{gather*}
 H_{g,p,r}:=\{x\in H_{g,p}\,|\, d_g(p,x)=r\}
 \end{gather*}
 is the radius-$r$ sphere centered at $p$ along $H$. The line tangent to the geodesic $pq$ will always be orthogonal to $T_q(Gq)$ (a geodesic horizontal at one point is horizontal everywhere:~\cite[Lem\-ma~9.44]{bes}), but the crucial observation is that by deforming $g$ slightly, we can
 \begin{enumerate}[$(a)$]\itemsep=0pt
 \item\label{item:20} keep $T_q(Gq)^{\perp}$ invariant,
 \item\label{item:17} make $T_q(H_{g,p,r})$ sweep out an open subset of the relevant Grassmannian, hence the desired generic-position conclusion.
 \end{enumerate}

 To achieve these last two goals (({\it \ref{item:20}}) and ({\it \ref{item:17}})) note first that denoting as above by
 \begin{gather*}
 \pi\colon\ M\to M/G
 \end{gather*}
 the canonical projection, the geodesics $p\to x$ for $p$ to points $x\in H_{g,p,r}$ are the horizontal lifts of~the geodesics in $M/G$ connecting $\pi(p)$ to the points $\pi(x)$ on the radius-$r$ sphere $S_{\pi(p),r}$ around~it. Now choose any submanifold $S$ of $M$ that
 \begin{itemize}\itemsep=0pt
 \item is $C^{\infty}$-close to $H_{g,p,\le 2r}$ (in particular, it is transverse to the $G$-orbits $\le 2r$ away from $Gp$ and has the same dimension as $H_{g,p,\le 2r}$),
 \item is horizontal (i.e.,~orthogonal to the $G$-orbits) along the geodesic line connecting $p$ and $q$, and
 \item coincides with $H_{g,p,\le 2r}$ off $H_{g,p,\le r}$.
 \end{itemize}
 We can now declare the tangent spaces to $S$ to be horizontal (for a new metric $g'$ on $M$), obtaining a $G$-invariant distribution on the tubular neighborhood
 \begin{gather*}
 \{x\in M\,|\, d_g(x,Gp)\le 2r\}
 \end{gather*}
 by operating with $G$. Because we imposed the condition that $S=H_{g,p,\le 2r}$ off $H_{g,p,\le r}$, this glues with $g$ to obtain a globally-defined $G$-invariant metric $g'$ on $M$ that perturbs $g$ slightly.

 With respect to $g'$ the new horizontal lifts of the geodesics
 \begin{gather*}
 \pi(p)\to \pi(x)\in S_{\pi(x),r}
 \end{gather*}
 are their lifts to $S=H_{g',p,\le r}$ (rather than the old $H_{g,p,\le r}$). Clearly, this gives us sufficient freedom to move the tangent space $T_q(H_{g',p,r})$ within a small neighborhood of the old $T_q(H_{g,p,r})$, as desired.
\end{proof}

\subsection*{Acknowledgements}

This work is partially supported by NSF grants DMS-1801011 and DMS-2001128.
I am indebted to the anonymous referees for numerous suggestions contributing to the improved quality of the initial draft. In particular, I would have remained unacquainted with~\cite{bd,sz} were it not for one of the referee reports.

\pdfbookmark[1]{References}{ref}
\LastPageEnding

\end{document}